\theoremstyle{plain}
\newtheorem{thm}{\bf Theorem}[section]
\newtheorem{lem}[thm]{\bf Lemma}
\newtheorem{defn}[thm]{\bf Definition}
\theoremstyle{remark} \newtheorem{remark}[thm]{\bf Remark}
\newcommand{\Rg}       {{\hbox{I\kern-.22em\hbox{R}}}}
\newcommand{\Pg}       {{\hbox{I\kern-.22em\hbox{P}}}}
\newcommand{\Eg}       {{\hbox{I\kern-.22em\hbox{E}}}}
\newcommand{\cov}{\mathop{\mathrm{cov}}\nolimits}
\title{LAN Property for the Drift and Hurst Paramters in The Mixed Fractional O-U Process with Continuous Observations }
\author{Cai Chunhao and Zhang Cong\\Sun Yat Sen University and Zhejiang University \\chunhao\_cai@nankai.edu.cn\\zcong@zju.edu.cn}
\date{\today}
\begin{document}
\maketitle

\begin{abstract}
This paper deals with the Local Asymptotical normality for the joint drift parameter and Hurst parameter $H>3/4$ in the mixed fractional Ornstein-Uhlenbeck process. Different from the only estimation of the drift parameter when $H$ is known, we will use the fact that the mixed fractional Brownian motion is a semimartingale with its own filtering when $H>3/4$.  \\

\paragraph{Key Words:}  Local Asymptotical Normality,\, Mixed fractional O-U Process,\, Laplace Transform,\, Maximum Likelihood Estimator 
\end{abstract}
\section{Introduction}
The Ornstein-Uhlenbeck process has been extensively applied in various fields, as diverse as economics, finance, high technology, biology, physics, chemistry, medicine and environmental studies. In fact, the standard Ornstein-Uhlenbeck models, including the diffusion models based on the Brownian motion and the jump-diffusion models driven by L\'{e}vy processes, provide good service in cases where the data demonstrate the Markovian property and the lack of memory. However, over the past few decades, numerous empirical studies have found that the phenomenon of long-range dependence may observe in data of hydrology, geophysics, climatology and telecommunication, economics and finance. In the continuous time situation, the best known and widely used stochastic process that exhibits long-range dependence is of course the fractional Brownian motion (fBm), which describes the degree of dependence by the Hurst parameter. Consequently, fBm is the usual candidate to capture some properties of ``Real-world" data and the applications of the fractional Ornstein-Uhlenbeck process (FOUP) have recently experienced intensive development (see, for example, \cite{comte1998long,CCR12}). The development of the application for fBm and FOUP naturally led to the statistical inference for stochastic models driven by fBm. Consequently, the parameter estimation problems for stochastic models driven by fBm and FOUP have been of great interest in the past decade, and beside being a challenging theoretical problem. Some surveys and complete literatures related to the parametric and other inference procedures for stochastic models driven by fBm could be found in \cite{Nualart,kubilius,HNZ19}.

The mixed fractional Brownian motion is the basic noise model in engineering applications, such as astronomical data \cite{Xu17}, GPS communications \cite{MHD99}. In finance, it has a very important property:  when $H>3/4$ we can exclude all arbitrage strategies and details can be found in \cite{cheridito}. From this view, the O-U process driven by mixed fractional Brownian motion will also play a very important role in the financial market and we should focus on the parametric estimation for this model. Now let us define 
\begin{equation}\label{eq: mixed OU}
dX_t=-\alpha X_tdt+dM_t^H,\, t\in [0,T],\,\alpha>0,\,  X_0=0.
\end{equation}
where $M_t^H=B_t^H+W_t$, the $B^H=\left(B_t^H,\, t\in [0,T]\right)$ is the fractional Brownian motion motion with Hurst parameter $H$ and $W$ is the independent standard Brownian motion. 
\begin{remark}
Here the initial $X_0$ can be any real number or random variable, we can add also the variace before the fractional Brownian motion and the standard Brownian motion, but they will not affect the final result and the procedure of the proof.  
\end{remark}
In \cite{CK18}, the authors consider when $H\in (0,1)$ is known, how to construct the MLE for the drift parameter from continuous observation datas and its asymptotical properties. However, when $H$ is unknown, this method will not be useful because the observation will multiply a kernel function containing the unknown parameter $H$ and this new process can not be considered as the data. In order to avoid this problem, in this paper we will use the fact that when $H>3/4$, the mixed fractional Brownian motion  is a semimartingale with respect to its own filtering which has been presented in \cite{CCK16}.  The details is that: when $H>3/4$
\begin{equation}\label{eq: BM}
\overline{B}_t = M_t^H- \int_{0}^{t}\rho_t(M; H)\,dt, t\in [0,T]
\end{equation}
is a Brownian motion with respect to $\mathcal{F}_t^M = \sigma\left\{M_s^H, s \le t\right\}$. Here 
\begin{equation}\label{eq: rho}
\rho_t(M; H) = \int_{0}^{t}g(t, t-s; H)\,dM_s^H, 
\end{equation}
where the function $g(t,s:H)$ uniquely solves the integral equation
\begin{equation}\label{eq: g}
g(t,s;H) + \int_{0}^{t}K_H(r-s)g(t,r;H)\,dr = K_H(s), 0<s<t, 
\end{equation}
and 
\begin{equation}\label{eq: K}
K_{H}(\tau)=H(2H-1)\left\lvert \tau \right\rvert ^{2H-2}
\end{equation}
which defines the covariance of fractional Brownian motion:
$$
\cov(B_s^H,\, B_t^H)=\int_0^s \int_0^t K_{H}(u-v)dudv.
$$

From the sample $X^T=\left(X_t,\, t\in [0,T]\right)$ Our objective is to identify the best achievable minimax rates in the large time ($T\rightarrow \infty$) asymptotic regimes for the unknown parameter $\vartheta=(\alpha, H)$. To this end, we prove the Local Asymptotic Normality (LAN) property and discuss the construction of the rate optimal estimators. 

\section{Main Result}
\subsection{The LAN Property and H\'aj\`ek Lower Bound}
First let us briefly introduce Le Cam's LAN property and its role in the asymptotic theory of estimation. An abstract parametric statistical experiment consists of a measurable space $\left(\Omega,\, \mathcal{F}\right)$, where $\mathcal{F}$ is a $\sigma$-algebra of subsets of $\Omega$, a family of probability measures $(\mathbf{P}_{\theta})_{\theta\in \Theta}$ on $\mathcal{F}$ with the parameter space $\Theta \subset \mathbb{R}^k$ and the sample $X \sim \mathbf{P}_{\theta_0}$
for a true value $\theta_0\in \Theta$ of the parameter variable. Asymptotic theory is concerned with a family of statistical experiments $\left(\Omega^T,\, \mathcal{F}^T,\, (\mathbf{P}_{\theta}^T)_{\theta\in \Theta}\right)$ indexed by a real valued variable $T>0$.

\begin{defn}
A family of probability measures $\left(\mathbf{P}_{\theta}^T\right)_{\theta \in \Theta}$ is LAN at a point $\theta_0$ as $T\rightarrow \infty$ if there exists a family of nonsingular $k\times k$ matrix $\phi(T)=\phi(T,\, \theta_0)$, such that for any $u\in \mathbb{R}^k$
$$
\log \frac{d\mathbf{P}^T_{\theta_0+\phi(T)u}}{d\mathbf{P}^T_{\theta_0}} \left(X^T\right)=u^*Z_{T,\theta_0}-\frac{1}{2}\| u \|^2+r_T(u,\theta_0),
$$
where
\begin{itemize}
\item $Z_{T,\theta_0}$ converges weakly under $\mathbf{P}^T_{\theta_0}$ to the standard normal law on $\mathbb{R}^k$

\item $r_T(u,\theta_0)$ vanishes in $\mathbf{P}^T_{\theta_0}$--probability as $T\rightarrow \infty$.
\end{itemize}

\end{defn}

Define a set $W_{2,k}$ of loss functions $\ell: \mathbb{R}^k \rightarrow \mathbb{R}$, which are continuous and symmetric with $\ell(0)=0$, have convex sub-level sets  $\{u:\ell(u)<c \}$ for all $c>0$
and satisfy the growth condition $\lim_{\|u\|\rightarrow 0}\exp(-a\|u \|^2)\ell(u)=0,\, \forall a>0$.  The
following theorem establishes asymptotic lower bound for the corresponding local minimax risks of estimators in LAN families.
\begin{thm}
Let $\left(\mathbf{P}_{\theta}^T\right)_{\theta\in \Theta}$ satisfy the LAN property at $\theta_0$ with matrix $\phi(T,\theta_0)\rightarrow 0$ as $T\rightarrow \infty$. Then for any family of estimator $\hat{\theta}_T$, a loss function $\ell\in W_{2,k}$ and any $\delta>0$ we have 
$$
\lim\limits_{\overline{T\rightarrow \infty}}\sup\limits_{\|\theta-\theta_0\|<\delta}\mathbf{E}_{\theta}^T\ell^{-1}(\phi(T,\theta_0)(\hat{\theta}_T-\theta)) \geq \int_{\mathbb{R}^k}\ell(x)\gamma(x)dx,
$$
where $\gamma$ is the standard normal density on $\mathbb{R}^k$.
\end{thm}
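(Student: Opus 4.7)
The plan is to prove this by the classical Hájek--Le Cam route: reduce the local minimax problem around $\theta_0$ to a minimax problem in the limit Gaussian shift experiment, and then apply Anderson's lemma to identify the limit with $\int\ell(x)\gamma(x)\,dx$. First I would reparametrize locally by setting $u=\phi(T,\theta_0)^{-1}(\theta-\theta_0)$ and $\hat u_T=\phi(T,\theta_0)^{-1}(\hat\theta_T-\theta_0)$, so that $\phi(T,\theta_0)(\hat\theta_T-\theta)=\hat u_T-u$. Since $\phi(T,\theta_0)\to 0$, for every fixed compact $K\subset\mathbb{R}^k$ we have $\{\theta_0+\phi(T,\theta_0)u:u\in K\}\subset\{\|\theta-\theta_0\|<\delta\}$ for all $T$ large enough, so the left-hand side of the theorem is bounded below by
\begin{equation*}
\liminf_{T\to\infty}\sup_{u\in K}\mathbf{E}^T_{\theta_0+\phi(T,\theta_0)u}\,\ell(\hat u_T-u).
\end{equation*}

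Next I would transfer this to the limit Gaussian shift experiment $\mathcal{G}=\{N(u,I_k):u\in\mathbb{R}^k\}$. By the LAN expansion, for any $u$ the log-likelihood ratio equals $u^{*}Z_{T,\theta_0}-\tfrac{1}{2}\|u\|^{2}+r_T(u,\theta_0)$, and since $Z_{T,\theta_0}\Rightarrow N(0,I_k)$ under $\mathbf{P}^T_{\theta_0}$, Le Cam's third lemma gives that under $\mathbf{P}^T_{\theta_0+\phi(T,\theta_0)u}$ the statistic $Z_{T,\theta_0}$ converges weakly to $N(u,I_k)$. A standard pricing argument (writing the Bayes risk over any finite prior $\pi$ on $K$ as an expectation with respect to $\mathbf{P}^T_{\theta_0}$ weighted by the likelihood ratios, then using LAN and dominated convergence that is justified by the growth condition on $\ell$) yields
\begin{equation*}
\liminf_{T\to\infty}\sup_{u\in K}\mathbf{E}^T_{\theta_0+\phi(T,\theta_0)u}\,\ell(\hat u_T-u)\;\ge\;\inf_{S}\sup_{u\in K}\mathbf{E}\,\ell(S(Y)-u),
\end{equation*}
where the infimum is over measurable $S:\mathbb{R}^k\to\mathbb{R}^k$ and $Y\sim N(u,I_k)$.

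Now I would compute the right-hand side for the Gaussian shift. Taking a Gaussian prior $\pi_s=N(0,s^{2}I_k)$ with support approximated by a compact $K_s$, the Bayes-optimal decision is the posterior mean $S^{*}(Y)=\frac{s^2}{1+s^2}Y$, and the posterior law of $u-S^{*}(Y)$ given $Y$ is $N(0,\frac{s^2}{1+s^2}I_k)$. Using Anderson's lemma (applicable because $\ell\in W_{2,k}$ has symmetric convex sublevel sets), the Bayes risk satisfies
\begin{equation*}
\mathbf{E}\,\ell(S^{*}(Y)-u)=\int_{\mathbb{R}^k}\ell(x)\,\gamma_{s^2/(1+s^2)}(x)\,dx\;\xrightarrow[s\to\infty]{}\;\int_{\mathbb{R}^k}\ell(x)\,\gamma(x)\,dx,
\end{equation*}
where $\gamma_{\sigma^2}$ denotes the centred Gaussian density with covariance $\sigma^2 I_k$ and convergence uses the growth condition $\lim_{\|u\|\to\infty}e^{-a\|u\|^{2}}\ell(u)=0$ for any $a>0$. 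Combining the two displays and letting $s\to\infty$ (so that any compact $K_s$ eventually contains the effective support of $\pi_s$) gives the claimed bound.

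The main obstacle is the Bayes-risk convergence step (the passage from LAN to the Gaussian-shift Bayes risk): one must handle the remainder $r_T$ uniformly in $u$ over the support of the prior, control the unbounded loss $\ell$ via a truncation argument exploiting the convergence of the localized likelihood ratios in $\mathbf{P}^T_{\theta_0}$-distribution, and argue uniform integrability of $\ell(\hat u_T-u)$ along the chosen prior. Once this step is in place, the remaining pieces (Anderson's lemma and the prior-variance limit) are routine.
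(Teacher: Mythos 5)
Your outline is essentially the canonical Hájek--Le Cam argument (localization, passage to the Gaussian shift limit experiment via LAN and contiguity, Bayes risk with a widening Gaussian prior, Anderson's lemma), which is exactly the proof the paper relies on: it gives no argument of its own and simply cites Theorem 12.1 of Ibragimov--Hasminskii, where this route is carried out in full. The only loose ends in your sketch are standard ones already present in that reference --- a Gaussian prior does not have compact support, so one must either truncate it or use uniform priors on cubes when transferring the Bayes risk through the LAN expansion, and the truncation/uniform-integrability step for the unbounded loss $\ell$ must be done carefully --- but the approach is correct and matches the source.
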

Proof  for this theorem can be found in  Theorem 12.1 of \cite{IH1981}. Estimators which achieve H\'aj\`ek lower bound are called asymptotically efficient in the local minimax sense. Usually likelihood based estimators, such as the Maximum Likelihood or the Bayes estimators with positive prior densities,
are asymptotically eﬃcient. However, they can be excessively complicated and thus one-step or multi-step MLE can be considered such as presented in \cite{BBCS24} or Whittle Likelihood Estimator \cite{Whittle53, Whittle62}. 

\begin{remark}
In fact, we usually consider the LAN property with discrete data in the fractional noise such as the high-frequency case \cite{BF18}. The continuous data case can be found in \cite{CK18, LNT19, Chiba2018}.
\end{remark}

\subsection{Main Theorem: LAN for mixed O-U process}
Denote by $\mathbf{P}_{\theta}^T$ the probability measure on the space of continuous functions
$C[0,T],\, \mathbb{R}$ induced by the mixed O-U process $X^T$  defined in \eqref{eq: mixed OU} with parameter $\theta=(H, \alpha)\in (3/4,1)\times \mathbb{R}_+$,  we have 
\begin{thm}\label{th: LAN for OU}
The family $(\mathbf{P}^T_\theta)_{\theta \in \Theta}$ is LAN at any $\theta_0 \in \Theta$ as $T \to \infty$ with
$\phi(T) = T^{-\frac{1}{2}}I(\theta_0)^{-\frac{1}{2}}$, where 
\begin{equation}\label{eq: fisher matrix}
I(\theta) = 
\begin{pmatrix}
\frac{1}{4\pi}\int_{-\infty}^{\infty}(\partial_H \log(\hat{K}_H(\lambda)+1))^2 \,d\lambda& \frac{\alpha}{2\pi}\int_{-\infty}^{\infty} \frac{\partial_H\log{\left(\hat{K}_H(\lambda)+1\right)}}{\alpha^2 + \lambda^2}\,d\lambda\\
\frac{\alpha}{2\pi}\int_{-\infty}^{\infty} \frac{\partial_H\log{\left(\hat{K}_H(\lambda)+1\right)}}{\alpha^2 + \lambda^2}\,d\lambda & \frac{1}{2\alpha} 
\end{pmatrix}.
\end{equation}
Here  $\hat{K}_H(\lambda) := \int_{-\infty}^{\infty}H(2H-1)\left\lvert \tau \right\rvert ^{2H-2}e^{-\mathrm{i}\lambda \tau} \,d\tau = \alpha_H \left\lvert \lambda \right\rvert ^{1-2H}$,$\lambda \in \mathbb{R}\backslash \{0\}$ with the constant
$\alpha_H := \Gamma(2H+1)sin(\pi H)$. The symbol $\partial_H$ denotes the partial derivative with respect to parameter variable $H$.
\end{thm}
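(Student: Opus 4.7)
The plan is to obtain the LAN expansion directly from Girsanov's theorem applied to the natural-filtration semimartingale decomposition of $X$, after which a second-order Taylor expansion in the parameter, combined with an ergodicity-plus-Whittle spectral identification, produces the Fisher information matrix $I(\theta_0)$.

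First I would fix the Girsanov setup. Because $H>3/4$, \eqref{eq: BM}--\eqref{eq: g} make $M^H$ a semimartingale in its own filtration, and substitution into \eqref{eq: mixed OU} yields
\begin{equation*}
X_t = \int_0^t b_s(\theta)\,ds + \bar B_t, \qquad b_s(\theta):=-\alpha X_s + \rho_s(M^H;H),
\end{equation*}
with $M^H$ reconstructed from $X$ via $M^H_t = X_t + \alpha\int_0^t X_u\,du$ and $\bar B$ the $\mathbf P^T_\theta$-innovation Brownian motion. Since the drift $b_s(\theta')$ is a measurable functional of the observed path $X^T$ for every admissible $\theta'\in\Theta$, Girsanov gives
\begin{equation*}
\log\frac{d\mathbf P^T_{\theta_0+\phi(T)u}}{d\mathbf P^T_{\theta_0}}(X^T) = \int_0^T \Delta_s\,d\bar B_s - \tfrac12\int_0^T \Delta_s^2\,ds,
\end{equation*}
where $\Delta_s := b_s(\theta_0+\phi(T)u) - b_s(\theta_0)$ and $\bar B$ is the $\mathbf P^T_{\theta_0}$-innovation.

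Next I would Taylor-expand $\Delta_s = \phi(T)u\cdot\nabla_\theta b_s(\theta_0) + O(\|\phi(T)\|^2)$ and rewrite the log-likelihood ratio as $u^*Z_{T,\theta_0}-\tfrac12 u^*\mathcal I_T u + r_T(u,\theta_0)$, with $Z_{T,\theta_0}:=\phi(T)^*\int_0^T\nabla_\theta b_s\,d\bar B_s$ and $\mathcal I_T:=\phi(T)^*\bigl(\int_0^T\nabla_\theta b_s(\nabla_\theta b_s)^*\,ds\bigr)\phi(T)$. With $\phi(T)=T^{-1/2}I(\theta_0)^{-1/2}$, LAN reduces to the three convergences $\mathcal I_T\to\mathrm{Id}_2$ in $\mathbf P^T_{\theta_0}$-probability, $Z_{T,\theta_0}\Rightarrow\mathcal N(0,\mathrm{Id}_2)$ under $\mathbf P^T_{\theta_0}$, and $r_T\to0$ in probability. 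The first two can be treated simultaneously via ergodicity plus the continuous martingale CLT, since the predictable bracket of $\int_0^\cdot\nabla_\theta b_s\,d\bar B_s$ is exactly the quadratic functional appearing in $\mathcal I_T$.

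The identification of $I(\theta_0)$ proceeds by ergodicity and a Whittle-type spectral computation. Because $\alpha_0>0$, \eqref{eq: mixed OU} admits a unique Gaussian stationary solution with spectral density $f_X(\lambda;\theta)=(\hat K_H(\lambda)+1)/(\alpha^2+\lambda^2)$, obtained by applying the OU transfer function $(i\lambda+\alpha)^{-1}$ to the noise $M^H$ whose increments have spectral density $\hat K_H(\lambda)+1$. The Whittle identity then reads
\begin{equation*}
\frac{1}{T}\int_0^T\!\!\nabla_\theta b_s\,(\nabla_\theta b_s)^*\,ds \;\xrightarrow{\mathbf P^T_{\theta_0}}\; \frac{1}{4\pi}\!\int_{-\infty}^{\infty}\!\!\nabla_\theta\log f_X(\lambda;\theta_0)\,(\nabla_\theta\log f_X(\lambda;\theta_0))^*\,d\lambda = I(\theta_0),
\end{equation*}
and substituting $\log f_X = \log(\hat K_H+1)-\log(\alpha^2+\lambda^2)$ together with $\int_{\mathbb R}d\lambda/(\alpha^2+\lambda^2)^2=\pi/(2\alpha^3)$ reproduces the entries of \eqref{eq: fisher matrix}. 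The remainder $r_T$ is then handled by one further derivative of $b$ together with $\phi(T)\to0$.

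The hard part will be this spectral identification: one must pass from the Wiener--Hopf kernel $g(t,s;H)$ of \eqref{eq: g} to its stationary Fourier description, where $\hat g(\lambda;H)(1+\hat K_H(\lambda)) = \hat K_H(\lambda)$ and hence $\partial_H\log(1+\hat K_H(\lambda))$ emerges naturally when differentiating the filter $\rho_t = g\ast dM^H$ in $H$. This involves justifying the interchange of $\partial_H$ with the $L^2$-limit defining $\rho_t$, absorbing the transient caused by $X_0=0$ into a negligible error as $T\to\infty$, and verifying integrability of the entries in \eqref{eq: fisher matrix} at $\lambda=0$ and $|\lambda|\to\infty$. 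As a useful sanity check, the $\alpha$-diagonal entry $1/(2\alpha)$ can be recovered independently from the classical ergodic formula for the drift score of the pure OU process, and the assumption $H>3/4$ is precisely what makes the entire Girsanov setup available through the semimartingale property of $M^H$.
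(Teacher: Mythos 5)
Your skeleton (Girsanov on the innovation representation, second--order Taylor expansion in $\theta$, an $L^2$--law of large numbers for $\frac1T\int_0^T\nabla b_t\nabla b_t^{*}\,dt$ plus the martingale CLT) is exactly the paper's roadmap. The gap is in the step you yourself flag as ``the hard part'': the spectral identification of $I(\theta_0)$. The identity you propose as the bridge, $\hat g(\lambda;H)\bigl(1+\hat K_H(\lambda)\bigr)=\hat K_H(\lambda)$, is the symbol of the \emph{non-causal} (whole-line) solution of the Wiener--Hopf equation, whereas $\rho_t=\int_0^t g(t,t-s)\,dM_s$ is a causal filter: the large-$t$ limit of \eqref{eq: g} is a half-line equation whose solution is given by the outer factorization $1+\hat K_H(\lambda)=\Lambda(\mathrm{i}\lambda)=Y(\mathrm{i}\lambda)Y(-\mathrm{i}\lambda)$ with $1-\hat g(\lambda)=1/Y(-\mathrm{i}\lambda)$, not by $1-\hat g=1/(1+\hat K_H)$. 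This is not a cosmetic difference: plugging your symbol into $\mathbf{E}\bigl[(\partial_H b)^2\bigr]=\frac1{2\pi}\int|\partial_H\hat g|^2(1+\hat K_H)\,d\lambda$ yields $\frac1{2\pi}\int(\partial_H\log(1+\hat K_H))^2(1+\hat K_H)^{-1}\,d\lambda$, which is \emph{not} the $(1,1)$ entry of \eqref{eq: fisher matrix}. The Whittle form $\frac1{4\pi}\int(\partial_H\log(1+\hat K_H))^2\,d\lambda$ only emerges from the causal symbol via $|\partial_H\log Y(-\mathrm{i}\lambda)|^2=\frac14(\partial_H\log\Lambda)^2+\frac14(\mathcal H[\partial_H\log\Lambda])^2$ and the $L^2$-isometry of the Hilbert transform; supplying and controlling this factorization (together with the finite-$t$ correction $\hat R_t$, i.e.\ the $p_t,q_t$ estimates of Lemma \ref{lempq}) is precisely the content of the paper's Section 3 and cannot be bypassed. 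Relatedly, your own Whittle formula gives the off-diagonal entry as $-\frac{\alpha}{2\pi}\int\frac{\partial_H\log(1+\hat K_H)}{\alpha^2+\lambda^2}\,d\lambda$, with a minus sign, so your claim that it ``reproduces the entries of \eqref{eq: fisher matrix}'' should be checked against the stated $+$ sign rather than asserted.

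The second gap is the appeal to ``ergodicity''. The observation $X$ is Gaussian but not Markov, and $\nabla b_t$ is not a stationary functional for finite $t$ (the kernel $g(t,\cdot)$ depends on $t$ itself, not only on time lags), so no off-the-shelf ergodic theorem applies. The paper proves the $L^2$ convergence \eqref{pf5} by hand: it computes $\mathbf{E}J(s,t;\theta_0)=Q(s,t)+R(s,t)$ explicitly through the Laplace transforms, shows the stationary part decays like $|t-s|^{-1}|\log|t-s||^{3}$ off the diagonal and the transient part like $t^{-1/2}+s^{-1/2}+(st)^{-b}$, and then uses Gaussianity to bound the variance of the time average by $\frac1{T^2}\iint\|\mathbf{E}J(s,t)\|^2\,ds\,dt$. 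Your proposal needs an equivalent quantitative covariance-decay argument; ``ergodicity plus the transient from $X_0=0$ is negligible'' does not by itself produce \eqref{pf37}. The remaining ingredients you list (reconstruction of $M^H$ from $X$, interchange of $\partial_H$ with the stochastic integral, the second-derivative bound for the remainder, the CLT via the predictable bracket) are correct and coincide with the paper's treatment.
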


\begin{remark}
The function $\hat{K}_H(\lambda)$ is Fourier transform of the kernel $K$ defined in \eqref{eq: K}. It does not decay suﬃciently fast to be integrable on $\mathbb{R}$ and hence, strictly speaking, it is not a spectral density of a stochastic process in the usual sense. So can we use this one to construct the multi-step MLE will be doubtable. However, theoretically the MLE can achieve the H\'aj\`ek lower bound.
\end{remark}

\subsection{Simulation Study}
As presented in \cite{CK23}, Whittle's formula in continuous time is a more subtle matter due to complexity of the absolute continuity relation between Gaussian measures on functions, it was never rigorously verified beyond processes with rational spectra. 

With the theorem \ref{th: LAN for OU}, the H\'aj\`ek asymptotic bound can be attained by the ML estimator, however the estimation procedure is not absolute. To find the estimation we will maximum the log-likelihood function
$$
\int_0^T b_t(X,\theta)\, dX_t 
- \frac{1}{2} \int_0^T b_t(X,\theta)^2 \, dt 
$$
with $b_t(X)$ defined below contains the the function $g(t,s;H)$, the  derivative with respect to  $H$ and the stochastic integral with respect to $X_t$. The realization needs a high precision numerical solution of the integral equation and approximation of the stochastic integral.

In our case, such a rate optimal estimator in the large time asymptotic regime can be constructed using the discrete samples of the observed continuous path $X_t,\, t\in [0,T]$ with a step $\Delta>0$. The discrete samples form a stationary sequence to which, e.g., Whittle's spectral estimator applied directly. In fact from the thesis of Hult \cite{Hult03} the spectral density of the discrete sample $\left(X_{1\Delta}, X_{2\Delta},\, \dots,\, X_{n\Delta}\right)$ will be 
$$
f_{\Delta}(\lambda)=\frac{a_H\Delta^{2H}}{2\pi}\sum_{k=\infty}^{\infty}\frac{|\lambda+2\pi k|^{1-2H}}{(\Delta \alpha)^2+(\lambda+2\pi k)^2}+ \frac{1}{2\alpha}\frac{1-e^{-2\alpha\Delta}}{1+e^{-2\alpha \Delta}-2\cos \lambda e^{-\alpha \Delta}}
$$
We take $\alpha=2,H=0.8$, the distance $\Delta=0.001$  and with step $N=100000$, we simulate 100 times and obtain the following result, the mean of of the two variable $\bar{\alpha}=2.13$ and $\bar{H}=0.85$. Because this paper not focus on  the problem of the simulation of discrete samples, so we will not present more details about this Whittle's estimation.  In fact, from the theory \cite{Dahlhaus89}, the limit risk can be made arbitrarily close to H\'aj\`ek with the Fisher information matrix \eqref{eq: fisher matrix} if $\Delta$ is chose small enough.

\section{Useful Lemmas for The Laplace Transform of Function $g(t,s)$}
In the next proof of the Theorem \ref{th: LAN for OU}, the properties of Laplace transform of the function $g(t,s)$ is the key point, following the work of Chiganky and Kleptsyna \cite{CK23}, we present here the useful Lemmas for it. First of all let us define define 
\begin{equation}\label{eq: Laplace of g exact}
\hat{g}_t(z;H)=\int_{-\infty}^{\infty}g(t,s;H)e^{-zs}ds=\int_0^t g(t,s)e^{-zs}ds,\, z\in \mathbb{C}, 
\end{equation}
Then we have 
\begin{lem}\label{lemma 1}
The Laplace transform $\hat{g}_t(z;H)$ can be presented in the following formula:
\begin{equation}
\label{eq: hat g moin 1}
\hat{g}_t(z;H) - 1 = \frac{\Phi_0(z) + e^{-tz} \Phi_1(-z)}{\Lambda(z)} 
\end{equation}
where
\begin{equation}
\label{eq: big Lambda}
\Lambda(z) = 1 + \frac{1}{2} \Gamma(2H + 1)  \left(z^{1 - 2H} + (-z)^{1 - 2H}  \right) 
\end{equation}
and the functions $ \Phi_0(z) $ and $ \Phi_1(z) $ are sectionally holomorphic on $ \mathbb{C} \backslash \mathbb{R}_+ $,
\begin{align}
\label{eq: phi1 and phi2}
\Phi_0(z) &= -1 + O(z^{1 - 2H}) \quad \text{and} \quad \Phi_1(z) = O(z^{1 - 2H}), \quad z \to \infty, \\
\Phi_0(z) &= O(z^{1 - 2H}) \quad \text{and} \quad \Phi_1(z) = O(z^{1 - 2H}), \quad z \to 0. 
\end{align}
\end{lem}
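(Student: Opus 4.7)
The approach is a Wiener-Hopf / Riemann-Hilbert factorization applied to the defining integral equation (\ref{eq: g}). First I would extend $f(s):=g(t,s;H)\mathbf{1}_{(0,t)}(s)$ by zero outside $(0,t)$ and rewrite (\ref{eq: g}) as an identity on all of $\mathbb{R}$,
\begin{equation*}
f(s) + \int_{-\infty}^{\infty} K_H(r-s) f(r)\,dr = K_H(s)\mathbf{1}_{(0,t)}(s) + \psi_-(s) + \psi_+(s-t),\qquad s\in\mathbb{R},
\end{equation*}
where $\psi_-$ is supported on $(-\infty,0)$ and $\psi_+$ on $(0,\infty)$. These a priori unknown ``defect'' functions simply encode the boundary values of the convolution on the complement of $(0,t)$, and they are the unknowns to be eliminated by the Laplace transform.

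Next I would identify the symbol. Using $\int_0^{\infty} s^{2H-2} e^{-zs}\,ds=\Gamma(2H-1)z^{1-2H}$ for $\Re z>0$ together with $H(2H-1)\Gamma(2H-1)=\tfrac12\Gamma(2H+1)$, one gets $\int_0^{\infty} K_H(s) e^{-zs}\,ds = \tfrac12\Gamma(2H+1)z^{1-2H}$; summing the half-line contributions produces exactly $\Lambda(z)$ defined in (\ref{eq: big Lambda}). As a sanity check, its restriction to $z=i\lambda$ is $1+\alpha_H|\lambda|^{1-2H}$ with $\alpha_H=\Gamma(2H+1)\sin(\pi H)$, matching $1+\hat K_H(\lambda)$. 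Applying the bilateral Laplace transform to the extended equation, the compact support of $f$ makes $\hat g_t(z)$ entire, the convolution contributes $(\Lambda(z)-1)\hat g_t(z)$, the inhomogeneity $K_H(s)\mathbf{1}_{(0,t)}(s)$ decomposes into two one-sided transforms (one analytic on $\mathbb{C}\setminus\mathbb{R}_-$ and one on $\mathbb{C}\setminus\mathbb{R}_+$) plus an $e^{-tz}$-modulated remainder, and the defect terms contribute $\int_{-\infty}^0\psi_-(s) e^{-zs}\,ds =: \Phi_0(z)$, sectionally holomorphic on $\mathbb{C}\setminus\mathbb{R}_+$, and $\int_t^{\infty}\psi_+(s-t)e^{-zs}\,ds = e^{-tz}\Phi_1(-z)$, with $\Phi_1$ sectionally holomorphic on the same cut plane. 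Collecting like pieces and dividing by $\Lambda(z)$ yields (\ref{eq: hat g moin 1}).

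Finally, the asymptotics (\ref{eq: phi1 and phi2}) would follow by matching leading orders in (\ref{eq: hat g moin 1}). As $z\to\infty$ away from $\mathbb{R}_+$, $\Lambda(z)\to 1$ and $\hat g_t(z)\to 0$ (for $\Re z\to+\infty$), which forces $\Phi_0(z)\to -1$ with an $O(z^{1-2H})$ correction dictated by the subleading behavior of $\Lambda-1$, and analogously pins down $\Phi_1(z)=O(z^{1-2H})$. Near $z=0$, the blow-up $\Lambda(z)\sim\tfrac12\Gamma(2H+1)(z^{1-2H}+(-z)^{1-2H})$ (since $1-2H<0$) combined with the analyticity of $\hat g_t$ at the origin forces both $\Phi_0$ and $\Phi_1$ to be $O(z^{1-2H})$. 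The main obstacle is handling the bilateral Laplace transform of the non-integrable kernel $K_H$, which must be interpreted via analytic continuation of one-sided transforms, and then verifying that after rearrangement the various branch-cut pieces recombine so that $\Phi_0$ and $\Phi_1$ really are holomorphic on $\mathbb{C}\setminus\mathbb{R}_+$ rather than on a mismatched cut plane; this bookkeeping of branch cuts, essentially the Plemelj-Sokhotski step of the Riemann-Hilbert problem, is the technical heart of the argument and the reason the computation is attributed to the method of Chiganky-Kleptsyna \cite{CK23}.
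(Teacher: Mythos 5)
The paper does not actually prove this lemma --- it is quoted from Chigansky--Kleptsyna \cite{CK23} --- and your Wiener--Hopf reduction (extend $g(t,\cdot\,;H)$ by zero, introduce defect functions supported off $(0,t)$, take Laplace transforms, read off the symbol) is exactly the mechanism used there, including the correct identification $H(2H-1)\Gamma(2H-1)=\tfrac12\Gamma(2H+1)$ that produces \eqref{eq: big Lambda} and its consistency with $1+\hat K_H(\lambda)$ on the imaginary axis. The one step that does not work as written is your derivation of the asymptotics \eqref{eq: phi1 and phi2}: ``matching leading orders'' in \eqref{eq: hat g moin 1} only constrains the \emph{combination} $\Phi_0(z)+e^{-tz}\Phi_1(-z)$, and only in regimes where $e^{-tz}$ is under control ($\Re z\to+\infty$ kills it, but the stated bounds must hold on all of $\mathbb{C}\setminus\mathbb{R}_+$, where $e^{-tz}$ can blow up); such an argument can neither separate $\Phi_0$ from $\Phi_1$ nor cover the whole cut plane. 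In \cite{CK23} the sectional holomorphy and the $O(z^{1-2H})$ estimates are not inferred a posteriori from the identity but follow from explicit Cauchy-type integral representations of $\Phi_0$ and $\Phi_1$ over $\mathbb{R}_+$, with densities whose endpoint behaviour is known; you gesture at this in your closing sentence, but it is the load-bearing part of the proof rather than residual bookkeeping, so as it stands the sketch establishes \eqref{eq: hat g moin 1} in structure but not the stated properties of $\Phi_0,\Phi_1$.
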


The next lemma gathers some useful properties $ \Lambda(z)$:
\begin{lem}\label{properties of big Lambda}
The function $ \Lambda(z) $ defined in \eqref{eq: big Lambda} is non-vanishing and sectionally holomorphic on $ \mathbb{C} \backslash \mathbb{R} $ with the limits
\begin{equation}\label{eq: pre big Lambda}
\Lambda^{\pm}(\tau) = 1 +  a_H |\tau|^{1 - 2H} \begin{cases}
e^{\pm (H - \frac{1}{2})\pi \mathrm{i}}, & \tau \in \mathbb{R}_+, \\
e^{\mp (H - \frac{1}{2})\pi \mathrm{i}}, & \tau \in \mathbb{R}_-,
\end{cases}
\end{equation}
where $ a_H = \Gamma(2H + 1) \sin(\pi H) $. For example, consider $\Lambda^{+}(\tau)$ with $\tau \in \mathbb{R}_+$, we have 
\begin{align*}
&z^{1-2H} \to \tau^{1-2H},\\
&(-z)^{1-2H} \to \tau^{1-2H}e^{-i\pi(1-2H)},   
\end{align*}
\begin{equation*}
\begin{aligned}
\Lambda^{+}(\tau) =& 1 + \frac{\Gamma(1+2H)}{2}|\tau|^{1-2H}\left(1+e^{-i\pi(1-2H)}\right)\\
=&1 + \frac{\Gamma(1+2H)}{2}|\tau|^{1-2H}\left(2\cos (\frac{1}{2}-H)\pi e^{-\mathrm{i}\pi(\frac{1}{2}-H)}\right)\\
=&1 + \Gamma(1+2H)|\tau|^{1-2H}\sin \left(H\pi\right)  e^{\mathrm{i}\pi(H-\frac{1}{2})}.         
\end{aligned}
\end{equation*}

These functions satisfy the symmetries
\begin{align}
\label{lt11}
\Lambda^{+}(\tau) &= \overline{\Lambda^{-}(\tau)},  \\
\frac{\Lambda^{+}(\tau)}{\Lambda^{-}(\tau)} &= \frac{\Lambda^{-}(-\tau)}{\Lambda^{+}(-\tau)}, 
\end{align}
and the principal branch of the argument $ \alpha(\tau) := \arg \{ \Lambda^{+}(\tau) \} $,
\begin{equation}
\label{eq:6.17}
\alpha(\tau) = \arctan \frac{ a_H \sin \big( (H - \frac{1}{2})\pi \big)}{|\tau|^{2H - 1} +  a_H \cos \big( (H - \frac{1}{2})\pi \big)} \operatorname{sign}(\tau), 
\end{equation}
is an odd decreasing function, continuous on $ \mathbb{R} \backslash \{0\} $, satisfying
\begin{equation}
\label{lt3}
\alpha(0+) = \pi (H - \tfrac{1}{2}) \quad \text{and} \quad \alpha(\tau) = O(\tau^{1 - 2H}) \quad \text{as } \tau \to \infty. 
\end{equation}
\end{lem}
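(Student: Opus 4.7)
The plan is to establish the four claims of the lemma in the order: boundary values, symmetries, argument formula, and finally the monotonicity/non-vanishing. Sectional holomorphy on $\mathbb{C}\setminus\mathbb{R}$ is clear from the definition once principal branches of $z^{1-2H}$ and $(-z)^{1-2H}$ are fixed with cuts along $\mathbb{R}_-$ and $\mathbb{R}_+$ respectively, so the sum extends analytically across the imaginary axis but has jumps across the real line.

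For the boundary values I would repeat the calculation already sketched in the excerpt for $\Lambda^{+}(\tau)$, $\tau>0$, in all four sign/side combinations. With the principal branch, $z\to\tau\pm i0$ for $\tau>0$ gives $z^{1-2H}\to\tau^{1-2H}$ and $(-z)^{1-2H}\to\tau^{1-2H}e^{\mp i\pi(1-2H)}$, while for $\tau<0$ the roles of $z^{1-2H}$ and $(-z)^{1-2H}$ are swapped. Factoring $e^{\mp i\pi(1-2H)/2}$ and using $2\cos\bigl((\tfrac12-H)\pi\bigr)=2\sin(H\pi)$ together with $a_H=\Gamma(2H+1)\sin(H\pi)$ produces the stated formula~\eqref{eq: pre big Lambda}. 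The symmetry $\Lambda^{+}(\tau)=\overline{\Lambda^{-}(\tau)}$ is then immediate since the coefficients of $\Lambda$ are real, and the second identity follows by combining this with the fact that changing $\tau\to-\tau$ exchanges $\mathbb{R}_+$ and $\mathbb{R}_-$ cases in~\eqref{eq: pre big Lambda}, flipping the sign in the exponent.

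For the argument formula, I would write $\Lambda^{+}(\tau)=1+a_H|\tau|^{1-2H}\cos((H-\tfrac12)\pi)+\mathrm{i}\,a_H|\tau|^{1-2H}\sin((H-\tfrac12)\pi)\operatorname{sign}(\tau)$ (after checking signs on both half-lines via the case distinction in~\eqref{eq: pre big Lambda}), and then divide numerator and denominator of $\operatorname{Im}/\operatorname{Re}$ by $|\tau|^{1-2H}$ to obtain~\eqref{eq:6.17}. The evaluation at $\tau\to 0+$ reduces to $\arctan(\tan((H-\tfrac12)\pi))=(H-\tfrac12)\pi$, which is in the principal branch because $H-\tfrac12\in(\tfrac14,\tfrac12)$. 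The asymptotic $\alpha(\tau)=O(\tau^{1-2H})$ as $\tau\to\infty$ follows from expanding $\arctan(x)=x+O(x^3)$ for small $x$, since the argument of the arctangent is of order $|\tau|^{1-2H}\to 0$. Oddness is visible from the explicit $\operatorname{sign}(\tau)$ factor, and strict monotonicity follows because $|\tau|^{2H-1}$ is strictly increasing in $|\tau|$ (using $2H-1>0$) while $\arctan$ is increasing, so the composition with the explicit sign gives an odd decreasing function.

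The main obstacle—and the most substantive step—is the non-vanishing of $\Lambda(z)$ on $\mathbb{C}\setminus\mathbb{R}$. For $z=re^{\mathrm{i}\theta}$ with $\theta\in(0,\pi)$ I would compute
\begin{equation*}
z^{1-2H}+(-z)^{1-2H}=r^{1-2H}\bigl(e^{\mathrm{i}\theta(1-2H)}+e^{\mathrm{i}(\theta-\pi)(1-2H)}\bigr)=2r^{1-2H}\sin(H\pi)\,e^{\mathrm{i}(1-2H)(\theta-\pi/2)}.
\end{equation*}
Since $H\in(\tfrac34,1)$ gives $|1-2H|<1$ and $|\theta-\pi/2|<\pi/2$, the argument $(1-2H)(\theta-\pi/2)$ lies in $(-\pi/2,\pi/2)$. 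Hence $z^{1-2H}+(-z)^{1-2H}$ has strictly positive real part, so $\operatorname{Re}\Lambda(z)>1$ and $\Lambda(z)\neq 0$ on the open upper half-plane; the lower half-plane is handled by the conjugation symmetry. On the real axis itself $\Lambda^{\pm}(\tau)$ has nonzero imaginary part $a_H|\tau|^{1-2H}\sin((H-\tfrac12)\pi)\neq 0$ for $\tau\neq 0$, which closes the non-vanishing claim and, incidentally, confirms that $\alpha(\tau)$ is well defined and continuous on $\mathbb{R}\setminus\{0\}$.
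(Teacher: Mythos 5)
Your proof is correct, and for the boundary values it follows exactly the computation the paper itself sketches (take principal branches, let $z\to\tau\pm i0$, factor out the half-angle and use $\cos\bigl((\tfrac12-H)\pi\bigr)=\sin(H\pi)$); the symmetries, the argument formula \eqref{eq:6.17}, and the limits \eqref{lt3} then follow by the same elementary reading-off of real and imaginary parts that the paper implicitly relies on. The one place where you go genuinely beyond the paper is the non-vanishing claim, which the paper simply asserts and delegates to Chigansky--Kleptsyna: your computation
\begin{equation*}
z^{1-2H}+(-z)^{1-2H}=2r^{1-2H}\sin(H\pi)\,e^{\mathrm{i}(1-2H)(\theta-\pi/2)},\qquad z=re^{\mathrm{i}\theta},\ \theta\in(0,\pi),
\end{equation*}
together with $|1-2H|<1$ and $|\theta-\pi/2|<\pi/2$, gives $\operatorname{Re}\Lambda(z)>1$ in the upper half-plane, which is a clean, self-contained argument and is consistent with the boundary formula \eqref{eq: pre big Lambda} in the limits $\theta\to 0,\pi$. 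Two minor points worth making explicit: the identification of $\alpha(\tau)$ with the arctangent of $\operatorname{Im}/\operatorname{Re}$ requires $\operatorname{Re}\Lambda^{+}(\tau)>0$, which holds because $\cos\bigl((H-\tfrac12)\pi\bigr)>0$ for $H\in(\tfrac34,1)$; and ``decreasing'' should be understood on each half-line separately, since $\alpha$ jumps upward by $2\pi(H-\tfrac12)$ across the origin.
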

Now with the Hilbert boundary value problem \cite{Gakhov} (details can be found in \cite{CK23}) we can get $\Lambda(z)$ with Sokhotski-Plemelj formula:
\begin{lem}
The function $\Lambda(z)$ can be identified by 
\begin{equation}
\label{eq: big Lambda equal product}
Y_{\mathrm{c}}(z) Y_{\mathrm{c}}(-z) = \Lambda(z), \quad z \in \mathbb{C} \backslash \mathbb{R}. 
\end{equation}
where $Y_{\mathrm{c}}(z)$  is defined by 
\begin{align}
\label{lt10}
Y_{\mathrm{c}}(z) &= \exp \left( \frac{1}{2\pi \mathrm{i}} \int_0^{\infty} \frac{\log \Lambda^{+}(\tau)/\Lambda^{-}(\tau)}{\tau - z} \, d\tau \right) \notag \\
&= \exp \left( \frac{1}{\pi} \int_0^{\infty} \frac{\alpha(\tau)}{\tau - z} \, d\tau \right), \quad z \in \mathbb{C} \backslash \mathbb{R}_+. 
\end{align}
It satisfies the asymptotic
\begin{equation}
\label{eq: asymptotic XCZ}
Y_{\mathrm{c}}(z) = \begin{cases}
O(z^{\frac{1}{2} - H}), & z \to 0, \\
1, & z \to \infty,
\end{cases} 
\end{equation}
\end{lem}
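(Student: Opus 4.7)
The plan is a standard Wiener-Hopf / Riemann-Hilbert factorization of $\Lambda$, in the spirit of Gakhov's treatment of Hilbert boundary value problems. The key preliminary observation is that the symmetry $\Lambda^{+}(\tau)=\overline{\Lambda^{-}(\tau)}$ from Lemma \ref{properties of big Lambda} gives $\log\bigl(\Lambda^{+}(\tau)/\Lambda^{-}(\tau)\bigr)=2\mathrm{i}\,\alpha(\tau)$ for $\tau>0$, which immediately identifies the two integral representations of $Y_{\mathrm{c}}$ in \eqref{lt10}. The bound $\alpha(\tau)=O(\tau^{1-2H})$ at infinity together with continuity of $\alpha$ at $0^{+}$ from \eqref{lt3} make the Cauchy integral absolutely convergent, so $Y_{\mathrm{c}}$ is well defined and sectionally holomorphic on $\mathbb{C}\setminus\mathbb{R}_{+}$; by Sokhotski-Plemelj it carries the boundary jump $Y_{\mathrm{c}}^{+}(\tau)/Y_{\mathrm{c}}^{-}(\tau)=\Lambda^{+}(\tau)/\Lambda^{-}(\tau)$ for $\tau>0$.

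I would then set $F(z):=Y_{\mathrm{c}}(z)\,Y_{\mathrm{c}}(-z)$, which is sectionally holomorphic on $\mathbb{C}\setminus\mathbb{R}$ because $Y_{\mathrm{c}}(-z)$ inherits its cut on $\mathbb{R}_{-}$. On $\mathbb{R}_{+}$ only the first factor jumps, producing exactly $\Lambda^{+}/\Lambda^{-}$; on $\mathbb{R}_{-}$ only the second factor jumps, with opposite orientation, producing the ratio $\Lambda^{-}(-\tau)/\Lambda^{+}(-\tau)$, which by the second symmetry in Lemma \ref{properties of big Lambda} coincides with the jump of $\Lambda$ at the corresponding negative argument. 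Hence $F$ has exactly the same sectional jump structure as $\Lambda$, so $F/\Lambda$ has trivial jump across the entire real line and extends holomorphically to $\mathbb{C}\setminus\{0\}$. A Liouville argument then identifies $F$ with $\Lambda$ once the growth at $0$ and $\infty$ is controlled.

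The asymptotic $Y_{\mathrm{c}}(z)\to 1$ as $z\to\infty$ follows by dominated convergence in the integral, using $\alpha(\tau)=O(\tau^{1-2H})$ and $H>1/2$; combined with $\Lambda(z)\to 1$ (since $z^{1-2H}+(-z)^{1-2H}\to 0$) this pins the Liouville constant to $1$. For $z\to 0$, I would split $\alpha(\tau)=\pi(H-\tfrac{1}{2})+r(\tau)$ with $r(0)=0$: the remainder contributes a bounded term, while the constant piece produces, after regularization against the tail, the dominant $(\tfrac{1}{2}-H)\log(-z)$ asymptotic, consistent with the branch prescribed by holomorphy on $\mathbb{C}\setminus\mathbb{R}_{+}$; exponentiating yields $Y_{\mathrm{c}}(z)=O(z^{1/2-H})$, so $F(z)=O(|z|^{1-2H})$, exactly matching the order of $\Lambda$ at the origin. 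The main obstacle is precisely this short-distance analysis: one must carefully regularize the divergent $\int_{0}^{\infty}(\tau-z)^{-1}d\tau$ against the decay of $\alpha$ at infinity, track the correct branch of $\log(-z)$, and verify that $F/\Lambda$ is genuinely bounded near zero; the remaining verifications amount to mechanical checks of boundary values.
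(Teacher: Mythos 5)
Your proposal is correct and follows essentially the same route as the paper, which states this lemma without proof and defers to the Hilbert boundary value problem machinery of \cite{Gakhov} as carried out in \cite{CK23}: the Sokhotski--Plemelj identification of the jump of $Y_{\mathrm{c}}$ on $\mathbb{R}_{+}$, the symmetry $\Lambda^{+}(\tau)/\Lambda^{-}(\tau)=\Lambda^{-}(-\tau)/\Lambda^{+}(-\tau)$ to match the sectional jumps of $Y_{\mathrm{c}}(z)Y_{\mathrm{c}}(-z)$ with those of $\Lambda$ on all of $\mathbb{R}\setminus\{0\}$, and a Liouville argument pinned down by the asymptotics at $0$ and $\infty$ (where the zero index of the problem, i.e.\ $2\alpha(0+)=2\pi(H-\tfrac12)<\pi$, is what makes the singularity of $F/\Lambda$ at the origin removable). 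No gaps worth flagging.
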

We define the complex function $Y(z)=z^kY_c(z)$ for some integer $k\in \mathbb{Z}$ and $h(\tau) := \frac{1}{2\mathrm{i}} \left( \frac{Y^{+}(\tau)}{Y^{-}(\tau)} - 1 \right) \frac{Y(-\tau)}{Y^{+}(\tau)}$, then 
\begin{lem}
The Laplace transform \eqref{eq: Laplace of g exact} satisfies
\begin{equation}\label{eq: laplace moin 1 exact}
\hat{g}_t(z;H) - 1 = -\frac{1}{Y(-z)} + \hat{R}_t(z), \quad z \in \mathbb{C}, 
\end{equation}
where
\begin{equation}
\label{ff3}
\hat{R}_t(z) := \frac{1}{Y(-z)} \big( p_t(-z) + q_t(-z) + 1 \big) + e^{-tz} \frac{1}{Y(z)} \big( p_t(z) - q_t(z) \big). 
\end{equation}
where 
\begin{equation}
\label{eq:6.27}
\begin{aligned}
p_t(s) &= \frac{1}{\pi} \int_0^{\infty} \frac{h(\tau) \mathrm{e}^{-t\tau}}{\tau + s} p_t(\tau) \, d\tau - \frac{1}{2},  \\
q_t(s) &= -\frac{1}{\pi} \int_0^{\infty} \frac{h(\tau) \mathrm{e}^{-t\tau}}{\tau + s} q_t(\tau) \, d\tau - \frac{1}{2}.    
\end{aligned}
\end{equation}
\end{lem}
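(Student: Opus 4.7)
The plan is to convert the representation of Lemma \ref{lemma 1}, namely
\[
\hat{g}_t(z;H) - 1 = \frac{\Phi_0(z) + e^{-tz}\Phi_1(-z)}{\Lambda(z)},
\]
into a Hilbert boundary value problem on $\mathbb{R}_+$ by exploiting the factorization $\Lambda(z) = Y_{\mathrm{c}}(z)Y_{\mathrm{c}}(-z)$ from \eqref{eq: big Lambda equal product}. The factor $Y_{\mathrm{c}}(z)$ carries the jump across $\mathbb{R}_+$ while $Y_{\mathrm{c}}(-z)$ is holomorphic there, so multiplying the identity by $Y(-z) = (-z)^k Y_{\mathrm{c}}(-z)$ isolates the discontinuity of $\hat{g}_t - 1$ onto a single half-line and reduces the task to determining a sectionally holomorphic function with prescribed jump across $\mathbb{R}_+$.

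Next I would compute this jump explicitly. Since $Y^{+}(\tau)/Y^{-}(\tau)$ equals the ratio $\Lambda^{+}(\tau)/\Lambda^{-}(\tau)$ up to the smooth factor involving $Y_{\mathrm{c}}(-\tau)$, the function $h(\tau) = \tfrac{1}{2\mathrm{i}}(Y^{+}(\tau)/Y^{-}(\tau) - 1)Y(-\tau)/Y^{+}(\tau)$ introduced just before the statement is exactly the density that the Sokhotski-Plemelj formula calls for. It expresses each sectionally holomorphic piece as a Cauchy integral of the form $\tfrac{1}{\pi}\int_0^\infty h(\tau)\,(\cdot)/(\tau + s)\,d\tau$. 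Introducing auxiliary unknowns $p_t(s)+\tfrac{1}{2}$ and $q_t(s)+\tfrac{1}{2}$ on $\mathbb{R}_+$ — the constant shift $\tfrac{1}{2}$ absorbing the contribution of $\Phi_0(\infty) = -1$ from \eqref{eq: phi1 and phi2} — and separating the terms without and with the factor $e^{-tz}$, the consistency of the two-sided decomposition on $\mathbb{R}_+$ forces $p_t$ and $q_t$ to satisfy the coupled system \eqref{eq:6.27}. The exponential weight $e^{-t\tau}$ inside those equations is inherited directly from the $e^{-tz}$ in the second numerator term of \eqref{eq: hat g moin 1}.

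Collecting all the pieces, the Cauchy integrals produce the $\hat{R}_t(z)$ contribution while the purely algebraic part of the decomposition yields the rational term $-1/Y(-z)$, which delivers the claimed representation \eqref{eq: laplace moin 1 exact}. The main obstacle is the asymptotic bookkeeping: the integer $k$ in $Y(z)=z^k Y_{\mathrm{c}}(z)$ must be chosen so that the Cauchy integrals converge at $\tau\to 0$ (where $Y_{\mathrm{c}}(z)$ has the mild singularity $O(z^{1/2-H})$ by \eqref{eq: asymptotic XCZ}) and at $\tau\to\infty$, while simultaneously reproducing the asymptotics $\Phi_0(z)=-1+O(z^{1-2H})$, $\Phi_1(z)=O(z^{1-2H})$ at infinity and $\Phi_0,\Phi_1=O(z^{1-2H})$ at zero stated in \eqref{eq: phi1 and phi2}. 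Once these growth conditions are verified and the associated index of the Hilbert problem is computed, uniqueness of the sectionally holomorphic solution yields both \eqref{eq: laplace moin 1 exact} and the defining equations \eqref{eq:6.27}.
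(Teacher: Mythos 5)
The paper itself gives no proof of this lemma: it is quoted from Chigansky and Kleptsyna \cite{CK23} (their Riemann--Hilbert analysis of the equation \eqref{eq: g}), so there is no in-paper argument to compare against. Measured against that source, your outline follows essentially the same route --- pass to the Laplace transform, use the factorization $\Lambda(z)=Y_{\mathrm{c}}(z)Y_{\mathrm{c}}(-z)$ to split the jump of \eqref{eq: hat g moin 1} onto a single half-line, apply Sokhotski--Plemelj with density $h(\tau)$, and read off \eqref{eq:6.27} from the boundary conditions --- and the structural observations you make (the $-\tfrac12$ shift matching $\Phi_0(\infty)=-1$, the $e^{-t\tau}$ weight inherited from $e^{-tz}$, the choice of $k$ governed by the index) are all consistent with how the representation \eqref{eq: laplace moin 1 exact} actually arises.

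That said, what you have written is a plan rather than a proof, and the decisive steps are asserted, not executed. First, the engine of the whole derivation is that $\hat g_t(z)$ is \emph{entire} (because $g(t,\cdot)$ is supported on $[0,t]$), so the right-hand side of \eqref{eq: hat g moin 1} must have vanishing jumps across both $\mathbb{R}_+$ and $\mathbb{R}_-$; you allude to ``consistency of the two-sided decomposition'' but never state or use this fact, and without it the boundary conditions that produce \eqref{eq:6.27} do not materialize. Second, you do not actually perform the computation showing that the jump conditions, after substituting $\Phi_0$, $\Phi_1$ in terms of $Y$, $p_t$, $q_t$, reduce \emph{exactly} to \eqref{eq:6.27} with the specific kernel $h(\tau)e^{-t\tau}/(\tau+s)$ --- this is the content of the lemma, not a corollary of the setup. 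Third, \eqref{eq:6.27} defines $p_t$ and $q_t$ only implicitly as solutions of Fredholm equations of the second kind (note they are in fact \emph{decoupled}, one equation for $p_t$ and one for $q_t$, not a coupled system as you say), so the lemma tacitly requires solvability; for large $t$ this follows from the contraction property supplied by $e^{-t\tau}$, but you never address it. Finally, a small imprecision: $Y^{+}(\tau)/Y^{-}(\tau)=\Lambda^{+}(\tau)/\Lambda^{-}(\tau)$ exactly on $\mathbb{R}_+$ (the factor $z^k$ and $Y_{\mathrm{c}}(-z)$ contribute no jump there), not merely ``up to a smooth factor.'' None of these points derails the approach, but each must be filled in before the sketch becomes a proof.
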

The last lemma gives us the asymptotic analysis of $p_t(z)$ and $q_t(z)$
\begin{lem}
\label{lempq}
For any closed ball $B \subset \Theta$, there exist constants $r_{\max} > 0$, $T_{\min} > 0$ and $C > 0$ such that for any $r \in [0, r_{\max}]$ and all $t \geqslant T_{\min}$
\begin{equation*}
\int_{-\infty}^{\infty} |m_t(\mathrm{i}\lambda)|^2 |\lambda|^{-r}  d\lambda \leqslant C t^{r - 1},
\end{equation*}
where $m_t(z)$ is any of the functions
\begin{equation}
\Big\{ p_t(z) + \tfrac{1}{2},\ q_t(z) + \tfrac{1}{2},\ \partial_j p_t(z),\ \partial_j q_t(z),\ \partial_i \partial_j p_t(z),\ \partial_i \partial_j q_t(z) \Big\}.
\label{eq:6.32}
\end{equation}
\end{lem}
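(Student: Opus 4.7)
The plan is to treat all six functions in \eqref{eq:6.32} by a single fixed-point analysis based on the Cauchy-type operator
\[
(\mathcal{T}_t f)(s) := \frac{1}{\pi}\int_0^{\infty}\frac{h(\tau)\,e^{-t\tau}}{\tau + s}\,f(\tau)\,d\tau,
\]
which encodes the right-hand side of \eqref{eq:6.27}. Substituting $p_t = -\tfrac{1}{2} + (p_t + \tfrac{1}{2})$ under the integral rewrites the first equation of \eqref{eq:6.27} as $(\mathrm{Id} - \mathcal{T}_t)(p_t + \tfrac{1}{2}) = -\tfrac{1}{2}\mathcal{T}_t\mathbf{1}$, and similarly for $q_t + \tfrac{1}{2}$ with opposite sign. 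Using the boundedness of $h(\tau)$ near $\tau = 0$ (which comes from a cancellation between $Y^{+}(\tau)/Y^{-}(\tau) - 1$ and $Y(-\tau)/Y^{+}(\tau)$ that is visible from Lemma \ref{properties of big Lambda}) together with the $e^{-t\tau}$ cut-off, I would first show that $\mathcal{T}_t$ is a strict contraction on a suitable weighted function space for every $t \geqslant T_{\min}$, uniformly in $\theta \in B$. The Neumann series $p_t + \tfrac{1}{2} = -\tfrac{1}{2}\sum_{n\geqslant 1}\mathcal{T}_t^{n}\mathbf{1}$ then reduces the whole problem to estimating the single function $(\mathcal{T}_t\mathbf{1})(\mathrm{i}\lambda)$ in the weighted $L^{2}$-norm on the imaginary axis.

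For this core estimate I would use Plancherel. Writing $(\tau + \mathrm{i}\lambda)^{-1} = \int_0^{\infty}e^{-(\tau+\mathrm{i}\lambda)u}\,du$ converts
\[
(\mathcal{T}_t\mathbf{1})(\mathrm{i}\lambda) = \frac{1}{\pi}\int_0^{\infty}e^{-\mathrm{i}\lambda u}\,\tilde{h}_t(u)\,du,\qquad \tilde{h}_t(u) := \int_0^{\infty}h(\tau)\,e^{-(t+u)\tau}\,d\tau,
\]
so the Plancherel identity gives $\int_{\mathbb{R}}|(\mathcal{T}_t\mathbf{1})(\mathrm{i}\lambda)|^{2}\,d\lambda = \tfrac{2}{\pi}\int_0^{\infty}|\tilde{h}_t(u)|^{2}\,du$. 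Boundedness of $h$ near the origin and its polynomial decay at infinity yield $|\tilde{h}_t(u)| \leqslant C/(t+u)$, which integrates to $C/t$ and produces the case $r = 0$. To insert the weight $|\lambda|^{-r}$ I would split at $|\lambda| = 1/t$: on $|\lambda| > 1/t$ the weight is $\leqslant t^{r}$ and the $r = 0$ bound gives $C t^{r-1}$ directly; on $|\lambda| \leqslant 1/t$ I would use the pointwise bound $|(\mathcal{T}_t\mathbf{1})(\mathrm{i}\lambda)| \leqslant C(1 + |\log(t|\lambda|)|)$, after which the rescaling $\lambda = \mu/t$ extracts exactly the factor $t^{r-1}$ together with a finite $\mu$-integral, provided $r$ stays strictly below $1$. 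This determines $r_{\max}$.

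For the derivative functions I would differentiate \eqref{eq:6.27} in the parameters. Since $h$ depends on $H$ but not on $\alpha$, partial derivatives with respect to $\alpha$ act trivially on $h$ and create no new difficulty; for $\partial_H$ the function $\partial_j p_t$ satisfies the perturbed equation $(\mathrm{Id} - \mathcal{T}_t)\partial_j p_t = \mathcal{T}_t^{(j)} p_t$, where $\mathcal{T}_t^{(j)}$ is the Cauchy operator built from $\partial_j h$ in place of $h$, and an analogous second-derivative equation with inhomogeneity $\mathcal{T}_t^{(ij)} p_t + \mathcal{T}_t^{(i)}\partial_j p_t + \mathcal{T}_t^{(j)}\partial_i p_t$. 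The Plancherel-plus-splitting estimate above applies to each inhomogeneous term with the obvious substitution. The main obstacle is exactly the verification that $\partial_H h$ and $\partial_H^{2} h$ inherit, uniformly for $\theta \in B$, the same boundedness near $\tau = 0$ and decay at $\tau = \infty$ as $h$ itself; this requires differentiating through the Sokhotski--Plemelj representation \eqref{lt10} of $Y_{\mathrm{c}}$ and tracking possible additional logarithmic factors, while keeping $r_{\max}$ uniform. Compactness of $B$ and continuity of these norms in $\theta$ then collapse the parameter-dependent estimates into the single constant $C$.
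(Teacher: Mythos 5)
First, a point of reference: the paper does not actually prove Lemma \ref{lempq}; it is quoted (down to the inherited equation numbers 6.27 and 6.32) from Chigansky and Kleptsyna \cite{CK23}, so there is no in-paper proof to compare yours against. Judged on its own terms, your architecture is the right shape: recasting \eqref{eq:6.27} as $(\mathrm{Id}-\mathcal{T}_t)(p_t+\tfrac12)=-\tfrac12\,\mathcal{T}_t\mathbf{1}$, estimating $\mathcal{T}_t\mathbf{1}$ on the imaginary axis via Plancherel, and inserting the weight $|\lambda|^{-r}$ by splitting at $|\lambda|=1/t$ are all sound, and the core computation ($|\tilde h_t(u)|\le C/(t+u)$, hence the $r=0$ bound $Ct^{-1}$; the logarithmic pointwise bound on $|\lambda|\le 1/t$ yielding $t^{r-1}$ after rescaling, for $r$ bounded away from $1$) is correct.

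The genuine gap is the sentence claiming that $\mathcal{T}_t$ is ``a strict contraction on a suitable weighted function space'': that claim is the entire content of the lemma, and it is precisely where the difficulty lives. By \eqref{lt3} one has $\alpha(0+)=\pi(H-\tfrac12)\neq0$, and the power singularities of $Y_{\mathrm c}(-\tau)$ and $Y_{\mathrm c}^{+}(\tau)$ from \eqref{eq: asymptotic XCZ} cancel in the ratio, so $h(\tau)$ tends to a \emph{nonzero} limit as $\tau\to0^{+}$ (there is no cancellation making it vanish, contrary to what your parenthetical suggests). Consequently $\sup_{s>0}\int_0^\infty \frac{|h(\tau)|e^{-t\tau}}{\tau+s}\,d\tau=\infty$ for every $t$: the Cauchy kernel produces a $\log(1/s)$ divergence as $s\to0^{+}$, so $\mathcal{T}_t$ is not bounded on $L^\infty(0,\infty)$, and the Neumann series $\sum_{n\ge1}\mathcal{T}_t^{n}\mathbf{1}$ cannot be summed by the sup-norm argument you sketch — already $\mathcal{T}_t^{2}\mathbf{1}$ requires feeding the unbounded function $\mathcal{T}_t\mathbf{1}$ back into the same kernel. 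Passing to a logarithmically weighted sup-norm does not repair this, since each iteration raises the power of the logarithm at the origin. What is actually required is an operator-norm bound for $\mathcal{T}_t$ on a space adapted to the measure $|h(\tau)|e^{-t\tau}\,d\tau$ on $(0,\infty)$ (a Hilbert-inequality/Schur-test computation showing the norm is $\le\rho<1$ uniformly for $\theta\in B$ and $t\ge T_{\min}$), followed by a separate transfer of the resulting control of $p_t$ on $(0,\infty)$ to the imaginary axis; this is the substantive work of the lemma and it is asserted rather than performed. The same issue propagates, compounded, to the derivative functions in \eqref{eq:6.32}: you correctly identify that uniform bounds on $\partial_H h$ and $\partial_H^{2}h$ are needed, but you leave them as an acknowledged obstacle, and the inhomogeneous equations for $\partial_j p_t$ and $\partial_i\partial_j p_t$ inherit the unresolved contraction step.
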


\section{Proof of Theorem \ref{th: LAN for OU}}
\subsection{The Proof Roadmap}
From formula \eqref{eq: BM} we have 
\begin{equation}
\label{eq: formula of X}
\begin{aligned}
X_t =& - \alpha \int_{0}^{t} X_s\,ds + \int_{0}^{t}\rho_s(M, H)\,ds + \overline{B}_t\\
=& \int_{0}^{t}b_s(X, \theta)\,ds + \overline{B}_t,
\end{aligned}
\end{equation}
where $b_t(X, \theta):= - \alpha X_t + \int_{0}^{t}g(t, t-s;H)\,dX_s + \alpha \int_{0}^{t}g(t, t-s;H)X_s\,ds$, recall that $\theta=(H,\alpha)$.

Let $\mathbf{P}^T$ and $\mathbf{P}^T_\theta$ be the probability measures on  $C([0,T],\mathbb{R})$ induced by the Brownian motion $\overline{B}$ and the mixed fractional  O-U process with parameter $\theta$, respectively. 
By the Girsanov theorem, applied to the innovation representation eq.\eqref{eq: formula of X},  $\mathbf{P}^T \sim \mathbf{P}^T_\theta$ with the Radon-Nikodym derivative
\begin{equation} \label{eq: Girsanov initial}
\frac{d\mathbf{P}^T_\theta}{d\mathbf{P}^T}(X^T) 
= \exp\left( 
\int_0^T b_t(X,\theta)\, dX_t 
- \frac{1}{2} \int_0^T b_t(X,\theta)^2 \, dt 
\right).  
\end{equation}
\begin{remark}
One can easily check $\mathbf{E}\left[\int_{0}^{T}b_s^2\,ds\right] < \infty$.
\end{remark}
From \eqref{eq: Girsanov initial}, we have 
\begin{equation}
\label{pf4}
\begin{aligned}
\frac{d\mathbf{P}_{\theta_0+\phi(T)u}}{d\mathbf{P}_{\theta_0}}(X^T) =& \exp(\int_{0}^{T}b_t(X,\theta_0+\phi(T)u)-b_t(X,\theta_0)\,d\overline{B}_t\\
&-\frac{1}{2}\int_{0}^{T}(b_t(X,\theta_0+\phi(T)u)-b_t(X,\theta_0))^2\,dt), 
\end{aligned}
\end{equation}
We can finish the proof of the LAN property in Theorem \ref{th: LAN for OU} if it is true for
\begin{equation}
\label{pf37}
\int_{0}^{T}(b_t(X,\theta_0+u/\sqrt{T})-b_t(X,\theta_0))^2\,dt \xrightarrow[\text{$T \to \infty$}]{\mathbf{P}} u^{\top} I(\theta_0) u,
\end{equation}
if \eqref{pf37} holds, then by the CLT for stochastic integrals \cite{kutoyants}, it implies that the following convergence in distribution
\begin{equation}
\int_{0}^{T}\left(b_t(X,\theta_0+u/\sqrt{T})-b_t(X,\theta_0)\right)\,d\overline{B}_t \xrightarrow[\text{$T \to \infty$}]{\mathrm{d}} u^{\top} N(0,I(\theta_0)^{1/2}).
\end{equation}
Let us define $\partial_1 := \partial_H$ and $\partial_2 := \partial_{\alpha}$. The kernel $K_H$ defined in \eqref{eq: K} is $L^2$- integrable on the interval $[0,T]$ for $\tau \neq 0$ and 
$$
dK_H/dH(\cdot),\, d^2 K_H/dH^2(\cdot)\in L^2([0,T]), 
$$ 
then the solution to the equation \eqref{eq: g} also has
$$
dg(T, \cdot: H)/dH,\, d^2g(T,\cdot: H)/dH^2\in L^2([0,T]),
$$
which can be interchanged with the stochastic integral in \eqref{eq: rho}. Consequently 
\begin{align}
&\partial_1 b_t(X,\theta) = \int_{0}^{t}\partial_1 g(t,t-s;H)\,dX_s + \alpha \int_{0}^{t}X_s \partial_1 g(t,t-s;H)\,ds,\\
&\partial_2 b_t(X,\theta) = -X_t + \int_{0}^{t}X_s g(t,t-s;H)\,ds.
\end{align}
Let $\nabla$ and $\nabla^2$ denote the gradient and the Hessian operators with respect to $\theta$, using the integral mean value theorem we have
\begin{equation}
\begin{aligned}
&b_t(X,\theta_0+u/\sqrt{T})-b_t(X,\theta_0)= \\
&\frac{1}{\sqrt{T}}\nabla b_t(X,\theta_0)u 
+ \frac{1}{T}\int_{0}^{1} \int_{0}^{\tau}  u^\top \nabla^2 b_t\left(X, \theta_0+su/\sqrt{T}\right)u\,ds  \,d\tau.
\end{aligned}
\end{equation}
To prove \eqref{pf37}, we only need to ensure the two limits
\begin{equation}
\label{pf5}
\frac{1}{T}\int_{0}^{T}\nabla^\top b_t(X,\theta_0) \nabla b_t(X,\theta_0)\,dt \xrightarrow[\text{$T \to \infty$}]{L^2(\Omega)} I(\theta_0),
\end{equation}
and
\begin{equation}
\label{pf38}
\frac{1}{T^2}\int_{0}^{T} \sup_{\left\lVert \theta - \theta_0\right\rVert \le \delta }  \mathbb{E}\left\lVert \nabla^2 b_t(X,\theta)\right\rVert ^2  \,dt \xrightarrow[\text{$T \to \infty$}]{} 0, 
\end{equation}
are both true. When $X_0=0$, we can easily find the solution for the model \eqref{eq: mixed OU}
\begin{equation}\label{eq: solution of O-U}
X_t = \int_{0}^{t} e^{-\alpha(t-u)}\,dM_u
\end{equation}

The key lemmas we need to prove \eqref{pf5} is as following,
\begin{lem}
\label{key}
Let us define
$$
J(s, t;\theta):= \nabla^\top b_s\left(X,\theta\right) \nabla b_t\left(X,\theta\right),
$$
then with the true value $\theta_0$ the expectation $J(s, t;\theta_0)$ satisfies 
\begin{equation}
\mathbf{E}J(s, t;\theta_0) = Q(s,t) + R(s,t),
\end{equation}
where for $t$, $s$ sufficiently large, 
\begin{equation}
\begin{aligned}
&\left\lvert Q(s,t) \right\rvert \le C \wedge \left\lvert t-s \right\rvert^{-1}\left\lvert \log|t-s|\right\rvert^3, \forall s,t \in \mathbb{R}^+,\\
&\left\lvert R(s,t) \right\rvert \le C\left(t^{-1/2} + s^{-1/2} + (st)^{-b}\right),\forall s,t \in [T_{min},\infty),
\end{aligned}
\end{equation}
with some constants $b \in \left(0, \frac{1}{2}\right) $, $C > 0$ and $T_{min} > 0$. And 
\begin{equation}
Q(t,t) \to
\begin{pmatrix}
\frac{1}{4\pi}\int_{-\infty}^{\infty}(\partial_H \log(\hat{K}_{H_0}(\lambda)+1))^2\,d\lambda& 
\frac{\alpha_0}{2\pi}\int_{-\infty}^{\infty} \frac{\partial_H\log{\left(\hat{K}_{H_0}(\lambda)+1\right)}}{\alpha_0^2 + \lambda^2}\,d\lambda\\
\frac{\alpha_0}{2\pi}\int_{-\infty}^{\infty} \frac{\partial_H\log{\left(\hat{K}_{H_0}(\lambda)+1\right)}}{\alpha_0^2 + \lambda^2}\,d\lambda & 
\frac{1}{2\alpha_0} 
\end{pmatrix}
\end{equation}
as $t \to \infty$.
\end{lem}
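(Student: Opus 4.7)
The plan is to reduce $\mathbf{E} J(s,t;\theta_0)$ to a Wiener-integral computation, pass to the Fourier domain where the increments of $M^{H_0}=B^{H_0}+W$ carry the spectral density $1+\hat K_{H_0}(\lambda)$, and then insert the Laplace-transform decomposition of $\hat g_t$ furnished by Lemma~\ref{lemma 1} and its successors. The stationary, $t$-independent piece $-1/Y(-z)$ will produce the principal term $Q(s,t)$, while the boundary correction $\hat R_t$ will produce the remainder $R(s,t)$.

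First, at the true parameter $dX_s=-\alpha_0 X_s\,ds+dM^{H_0}_s$. Differentiating the expression for $b_t(X,\theta)$ and using this identity to eliminate $dX_s$ from $\partial_H b_t$ gives
\begin{align*}
\partial_H b_t(X,\theta_0) &= \int_0^t \partial_H g(t,t-u;H_0)\,dM^{H_0}_u,\\
\partial_\alpha b_t(X,\theta_0) &= -X_t+\int_0^t g(t,t-s;H_0)X_s\,ds.
\end{align*}
Substituting $X_s=\int_0^s e^{-\alpha_0(s-u)}\,dM^{H_0}_u$ and swapping integrations rewrites each component as $\int_0^t k^{(i)}_t(u)\,dM^{H_0}_u$ with an explicit deterministic kernel $k^{(i)}_t$ supported on $[0,t]$. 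Extending by zero and applying Plancherel,
\[
\mathbf{E}\bigl[\partial_i b_s\,\partial_j b_t\bigr]=\frac{1}{2\pi}\int_{-\infty}^{\infty}\hat k^{(i)}_s(\mathrm{i}\lambda)\,\overline{\hat k^{(j)}_t(\mathrm{i}\lambda)}\bigl(1+\hat K_{H_0}(\lambda)\bigr)\,d\lambda.
\]
The change of variable $v=t-u$ produces $\hat k^{(1)}_t(\mathrm{i}\lambda)=e^{-\mathrm{i}\lambda t}\,\overline{\partial_H\hat g_t(\mathrm{i}\lambda;H_0)}$ and an analogous formula for $\hat k^{(2)}_t$, so the entire $(s,t)$-dependence collects into an oscillating phase $e^{\mathrm{i}\lambda(t-s)}$ together with $\partial_H\hat g_s,\partial_H\hat g_t$ and their $\alpha$-analogues.

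Second, inserting $\hat g_t(z;H_0)-1=-1/Y(-z)+\hat R_t(z)$ from~\eqref{eq: laplace moin 1 exact} (and its $H$-derivative, justified uniformly on a closed ball of $\Theta$ by the analytic dependence of $\Lambda$, $Y$, $p_t$, $q_t$ on $H$) decomposes $\hat k^{(i)}_t=\hat k^{(i),\infty}+\hat k^{(i),R}_t$ with only the second piece $t$-dependent. Define $Q(s,t)$ as the Plancherel integral built from $\hat k^{(\cdot),\infty}$ alone, and set $R(s,t):=\mathbf{E}J(s,t;\theta_0)-Q(s,t)$. The identity $Y(\mathrm{i}\lambda)Y(-\mathrm{i}\lambda)=\Lambda(\mathrm{i}\lambda)=1+\hat K_{H_0}(\lambda)$ from Lemma~\ref{properties of big Lambda} collapses the integrand of $Q(t,t)$ to $(\partial_H\log(1+\hat K_{H_0}(\lambda)))^2/(4\pi)$ on the $(H,H)$ diagonal, to $1/(\pi(\alpha_0^2+\lambda^2))$ on the $(\alpha,\alpha)$ diagonal (integrating to $1/(2\alpha_0)$), and to the appropriate cross product on the mixed entries, recovering exactly $I(\theta_0)$ from~\eqref{eq: fisher matrix}. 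For $s\neq t$, two integrations-by-parts in $\lambda$ against the oscillating phase $e^{\mathrm{i}\lambda(t-s)}$ produce the $|t-s|^{-1}$ gain; balancing the mild $\log|\lambda|$ singularity of $\partial_H\log(1+\hat K_{H_0}(\lambda))$ at $\lambda=0$ against the $O(|\lambda|^{1-2H_0})$ decay at infinity via a frequency cut-off at scale $|t-s|^{-1}$ then produces the cubic logarithmic factor.

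For $R(s,t)$ every contribution to $\hat k^{(i),R}_t$ involves at least one function from the list in~\eqref{eq:6.32}, and Lemma~\ref{lempq} controls the weighted $L^2$-norm of each of these by $O(t^{r-1})$ uniformly on a closed ball of $\Theta$. Cauchy--Schwarz against the $L^2(|\lambda|^{-r}\,d\lambda)$-norm of the principal symbol then yields the cross terms $O(s^{-1/2}+t^{-1/2})$, while the pure $\hat k^{(\cdot),R}_s\cdot\hat k^{(\cdot),R}_t$ piece contributes $O((st)^{-b})$ for some $b\in(0,1/2)$ after optimising the weight exponent $r\in[0,r_{\max}]$. The main obstacle is the sharp off-diagonal bound on $Q(s,t)$: the Fourier symbol is neither smooth at the origin nor rapidly decaying at infinity, and extracting the $|t-s|^{-1}|\log|t-s||^3$ rate requires a non-trivial cut-off argument matching the endpoint log-singularity against the polynomial tail, with each integration-by-parts contributing one additional logarithm.
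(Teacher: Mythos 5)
Your overall architecture is the same as the paper's: write $\partial_H b_t$ and $\partial_\alpha b_t$ as Wiener integrals against $dM^{H_0}$, pass to the Fourier domain via Plancherel with the spectral weight $\Lambda(\mathrm{i}\lambda)=1+\hat K_{H_0}(\lambda)$, split each symbol through $\hat g_t(z)-1=-1/Y(-z)+\hat R_t(z)$ so that the $Y$-part defines $Q$ and everything containing $\hat R_t$ defines $R$, bound $Q(s,t)$ off the diagonal by a frequency cut-off at $|\lambda|\sim|t-s|^{-1}$ plus integration by parts, and bound $R$ by Cauchy--Schwarz against the weighted $L^2$ estimates of Lemma~\ref{lempq}. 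That is exactly the paper's route (the paper additionally imports the $(1,1)$-entry wholesale from Chigansky--Kleptsyna, which is where the cubic logarithm actually originates; your own $Q_{12}$-type entries only need $|\log|t-s||^2$, and $Q_{22}$ decays exponentially since $Y(\mathrm{i}\lambda)Y(-\mathrm{i}\lambda)$ cancels $\Lambda(\mathrm{i}\lambda)$ there).

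There is one step where your sketch asserts something false and the paper has to do real work: the diagonal limit of the mixed entry. After the cancellation $Y(\mathrm{i}\lambda)Y(-\mathrm{i}\lambda)=\Lambda(\mathrm{i}\lambda)$, the integrand of $Q_{12}(t,t)$ is $\frac{1}{\alpha_0-\mathrm{i}\lambda}\,\partial_H\log Y(-\mathrm{i}\lambda)$ (up to the decaying boundary term), and this is \emph{not} pointwise equal to $\frac{\alpha_0}{\alpha_0^2+\lambda^2}\cdot\tfrac12\partial_H\log\bigl(1+\hat K_{H_0}(\lambda)\bigr)$; the identity only gives $\partial_H\log Y(-\mathrm{i}\lambda)+\partial_H\log Y(\mathrm{i}\lambda)=\partial_H\log\Lambda(\mathrm{i}\lambda)$, and $\partial_H\log Y(-\mathrm{i}\lambda)$ is genuinely complex and asymmetric. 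To recover the stated entry of $I(\theta_0)$ you need two additional facts: the symmetrization $\int\frac{1}{\alpha_0-\mathrm{i}\lambda}\,\partial_H\log\Lambda(\mathrm{i}\lambda)\,d\lambda=\int\frac{\alpha_0}{\alpha_0^2+\lambda^2}\,\partial_H\log\Lambda(\mathrm{i}\lambda)\,d\lambda$ (evenness of $\Lambda$), and the vanishing $\int\frac{1}{\alpha_0-\mathrm{i}\lambda}\,\partial_H\log Y(\mathrm{i}\lambda)\,d\lambda=0$, which the paper obtains from the Cauchy-type representation \eqref{lt10} of $Y_{\mathrm c}$ together with $\int_{-\infty}^{\infty}\frac{d\lambda}{(\alpha_0-\mathrm{i}\lambda)(\tau-\mathrm{i}\lambda)}=0$ (both poles lie in the same half-plane). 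Your claim that the identity ``collapses the integrand to the appropriate cross product'' skips precisely this argument, and without it the mixed entries of the limit matrix are not established. Two smaller slips: a single integration by parts on each half-line already yields the $|t-s|^{-1}$ gain (two would give $|t-s|^{-2}$, which the symbol cannot support near $\lambda=0$), and the $(\alpha,\alpha)$ integrand should be $\frac{1}{2\pi(\alpha_0^2+\lambda^2)}$, not $\frac{1}{\pi(\alpha_0^2+\lambda^2)}$, to integrate to $\frac{1}{2\alpha_0}$.
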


The next lemma we need is 
\begin{lem}
\label{key2}
for all sufficiently small $\delta > 0$ there exist constants $C > 0$ and $T_{min} > 0$ such that
\begin{equation}
\sup_{\left\lVert \theta - \theta_0\right\rVert \le \delta }  \mathbf{E}\left\lVert \nabla^2 b_t(X, \theta)\right\rVert ^2 \le C, \quad \forall t \ge T_{min},
\end{equation}
\end{lem}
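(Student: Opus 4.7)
The plan is to control each entry of the Hessian $\nabla^2 b_t(X,\theta)$ separately. Since $b_t$ is affine in $\alpha$, the entry $\partial_\alpha^2 b_t$ vanishes identically, so the only work is in
\begin{align*}
\partial_\alpha\partial_H b_t(X,\theta) &= \int_0^t X_s\,\partial_H g(t,t-s;H)\,ds, \\
\partial_H^2 b_t(X,\theta) &= (\alpha - \alpha_0)\int_0^t X_s\,\partial_H^2 g(t,t-s;H)\,ds + \int_0^t \partial_H^2 g(t,t-s;H)\,dM_s^{H_0},
\end{align*}
where in the second line I substituted the true dynamics $dX_s = -\alpha_0 X_s\,ds + dM_s^{H_0}$ under $\mathbf{P}_{\theta_0}$. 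Using the explicit solution \eqref{eq: solution of O-U} and stochastic Fubini, each term above collapses to a single Wiener-type integral $\int_0^t f_\theta(u)\,dM_u^{H_0}$ with a deterministic kernel $f_\theta(u)$ that is affine in $\alpha$ and depends continuously on $H$.

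Next I would compute each second moment in the spectral domain. Since $M^{H_0} = B^{H_0} + W$ with independent summands, Parseval's identity gives
$$
\mathbf{E}\bigg|\int_0^t f_\theta(u)\,dM_u^{H_0}\bigg|^2 = \frac{1}{2\pi}\int_{-\infty}^{\infty}|\hat{f}_\theta(\lambda)|^2\bigl(1+\alpha_{H_0}|\lambda|^{1-2H_0}\bigr)\,d\lambda,
$$
with $f_\theta$ extended by zero outside $[0,t]$. The task is therefore to show boundedness of this spectral integral uniformly in $t\geq T_{\min}$ and in $\|\theta-\theta_0\|\leq \delta$. I would obtain $\hat{f}_\theta$ from the Laplace representation \eqref{eq: laplace moin 1 exact}: differentiating the identity $\hat{g}_t(-\mathrm{i}\lambda;H)-1 = -1/Y(\mathrm{i}\lambda) + \hat{R}_t(-\mathrm{i}\lambda)$ up to second order in $H$ splits $\hat{f}_\theta(\lambda)$ into a \emph{main part} built from $Y$, $\partial_j Y$, $\partial_i\partial_j Y$ and a \emph{remainder part} built from $p_t+\tfrac{1}{2}$, $q_t+\tfrac{1}{2}$ and their first- and second-order derivatives from the family \eqref{eq:6.32}.

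The main part yields a $t$-independent spectral integral whose integrability at $\lambda=0$ is ensured by $Y(z)=O(z^{1/2-H})$ from \eqref{eq: asymptotic XCZ} paired with the weight $|\lambda|^{1-2H_0}$ (crucially using $H_0>3/4$), and at $\lambda=\infty$ by $Y(z)\to 1$ together with the decay of $\hat f_\theta$ coming from the $L^2([0,t])$ truncation. The remainder part is bounded by a direct application of Lemma \ref{lempq} with some $r$ slightly larger than $2H_0-1$, producing a contribution of order $t^{r-1}\to 0$. Uniformity in $\theta$ over the closed ball $\{\|\theta-\theta_0\|\leq \delta\}$ comes from the continuous dependence of $Y$ and its $H$-derivatives on $H$, combined with the fact that Lemma \ref{lempq} is stated for any closed ball $B\subset\Theta$, so its constants $C$, $r_{\max}$, $T_{\min}$ can be chosen to work for the whole neighborhood. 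I expect the main obstacle to be the bookkeeping at low frequency: each $H$-derivative of $Y$ introduces a logarithmic factor $\log|\lambda|$, and one must verify that these factors remain integrable against $|\lambda|^{1-2H_0}$ near zero, which forces a careful split of the spectral integral into low- and high-frequency pieces and a separate treatment of the $\log^k$-corrections in each regime.
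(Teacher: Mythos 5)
Your overall route is the same as the paper's: reduce each Hessian entry to a Wiener-type integral against $M^{H_0}$ with a deterministic kernel, pass to the spectral domain with weight $\Lambda(\mathrm{i}\lambda;H_0)=1+a_{H_0}|\lambda|^{1-2H_0}$, split the transformed kernel into a main part built from $Y$ and its $H$-derivatives and a remainder built from the family \eqref{eq:6.32}, and finish with the asymptotics \eqref{eq: asymptotic XCZ}, \eqref{ff1} and Lemma~\ref{lempq}. One place where you are actually more careful than the paper: under $\mathbf{P}_{\theta_0}$ one has $dX_s+\alpha X_s\,ds=dM_s+(\alpha-\alpha_0)X_s\,ds$, so $\partial_H^2 b_t$ carries the extra term $(\alpha-\alpha_0)\int_0^t X_s\,\partial_H^2 g\,ds$ that the paper silently drops; your version is the correct one, and that term is harmless because after the convolution trick its second moment is controlled by the same spectral integral $\int|\partial_1^2\hat g_t|^2\Lambda\,d\lambda$ times $\delta^2$.

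The one step that would fail as written is your application of Lemma~\ref{lempq} to the remainder ``with some $r$ slightly larger than $2H_0-1$.'' The lemma only guarantees the bound for $r\in[0,r_{\max}]$ with an unspecified $r_{\max}>0$, and nothing entitles you to $r_{\max}>2H_0-1>1/2$. More to the point, you do not need such a large $r$: every remainder term in $\hat R_t$ comes with a prefactor $1/Y(\pm z)$, and since $\Lambda(\mathrm{i}\lambda;H)=Y(\mathrm{i}\lambda)Y(-\mathrm{i}\lambda)$ the squared weight is $|1/Y(\mathrm{i}\lambda;H)|^2\Lambda(\mathrm{i}\lambda;H_0)=\Lambda(\mathrm{i}\lambda;H_0)/\Lambda(\mathrm{i}\lambda;H)=O(|\lambda|^{-2\delta})$ near zero (respectively $O(|\lambda|^{-2\delta}\log^4|\lambda|^{-1})$ when a factor $\partial_1(1/Y)$ appears), not $O(|\lambda|^{1-2H_0})$. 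The singular spectral weight is cancelled by exactly the same $Y(z)=O(z^{1/2-H})$ mechanism you invoke for the main part, so Lemma~\ref{lempq} is to be applied with $r$ of order $2\delta$ plus a little room for the logarithms — this is precisely how the paper bounds its integrals $\mathrm{I}$ and $\mathrm{II}$ in \eqref{final1}--\eqref{final2}, yielding $Ct^{r-1}\to 0$. With that correction (and noting that the decay of the main part at $\lambda\to\pm\infty$ comes from $\partial_1(1/Y)=O(|\lambda|^{1-2H}\log|\lambda|)$ with $4H-2>1$, not from ``$L^2$ truncation''), your plan closes.
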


\begin{remark}
In fact the lemma \ref{key} implies 
\begin{equation}
\label{pf35}
\mathbf{E}J(t,t;\theta_0) \to
\begin{pmatrix}
\frac{1}{4\pi}\int_{-\infty}^{\infty}(\partial_H \log(\hat{K}_{H_0}(\lambda)+1))^2\,d\lambda& 
\frac{\alpha_0}{2\pi}\int_{-\infty}^{\infty} \frac{\partial_H\log{\left(\hat{K}_{H_0}(\lambda)+1\right)}}{\alpha_0^2 + \lambda^2}\,d\lambda\\
\frac{\alpha_0}{2\pi}\int_{-\infty}^{\infty} \frac{\partial_H\log{\left(\hat{K}_{H_0}(\lambda)+1\right)}}{\alpha_0^2 + \lambda^2}\,d\lambda & 
\frac{1}{2\alpha_0}  
\end{pmatrix}
\end{equation} 
as $t \to \infty$
and 
\begin{equation}
\label{pf11}
\frac{1}{T^2}\int_{0}^{T} \int_{0}^{T} \left\lVert \mathbb{E}J(s,t;\theta_0) \right\rVert^2  \,ds \,dt \xrightarrow[\text{$T \to \infty$}]{} 0.    
\end{equation}
By Lemma \ref{key2}, the limit \eqref{pf38} holds. It can be easily verified that $\nabla b_t\left(X,\theta_0\right)$ is a centered Gaussian process. By \eqref{pf11} and \eqref{pf35} these limits ensure \eqref{pf5} is true.
\end{remark} The next two subsections will contribute to the proofs of Lemma \ref{key} and Lemma \ref{key2}.

\subsection{Proof of Lemma \ref{key}}
Consider the partial derivatives, in view of eq.\eqref{eq: solution of O-U}, we have 
$$
\partial_1 b_s = \int_{0}^{s}\partial_1 g(s, s-r; H)\,\left[dX_r + \alpha X_r dr\right]=  \int_{0}^{s}\partial_1 g(s, s-r; H)\,dM_r,         
$$
$$
\partial_2 b_s= -\int_{0}^{t}e^{-\alpha(t-s)}\,dM_s + \int_{0}^{t}\int_{r}^{t}g(t,t-s ;H)e^{-\alpha(s-r)}\,dsdM_r.         
$$
Let $l(t,u)=-e^{-\alpha u}\mathbf{1}_{[0,t]}$, then 
\begin{equation*}
\begin{aligned}
h(t, r) :=& \int_{r}^{t}g(t,t-s;H)e^{-\alpha(s-r)}\,ds\\
=&\int_{0}^{t-r}g(t,s;H)e^{-\alpha(t-s-r)}\,ds\\
=&(g(t, \cdot)*(-l(t,\, \cdot)))(t-\tau) 
\end{aligned}
\end{equation*}
where $(g(t, \cdot)*(-l(t,\, \cdot)))(s) = - \int_{0}^{s}g(t,\tau)l(t,s-\tau)\,d\tau$ is convolution product for $t$ fixed. 

Now when  the Laplace transform of  $l(t,u)$ is 
$$
\hat{l}_t(z) = \int_{0}^{t}e^{-zs}l(s)\,ds=\frac{1}{z+\alpha} \left(e^{-(z+\alpha)t}-1\right),
$$
with the convolution theorem we have the Laplace Transform of $h(t,\, \cdot)$ can be presented by 
$$
\hat{h}_t(z) = -\hat{g}_t(z)\frac{1}{z+\alpha}\left(e^{-(z+\alpha)t}-1\right)
$$

From now on, in this subsection we will omit $H_0$ from the notations and denote the true value $\alpha_0$ by $\alpha$ for brevity.
Firstly, we consider 
\begin{equation}
\begin{aligned}
&\mathbf{E}\partial_1 b_s(X) \partial_1 b_t(X)\\ 
=&\mathbf{E}\int_{0}^{s}\partial_1 g(s,s-x)[dX_x + \alpha X_x\,dx]\int_{0}^{t}\partial_1 g(t,t-y)[dX_y + \alpha X_y\,dy] \\
=&\mathbf{E}\int_{0}^{s}\partial_1 g(s,s-x)\,dM_x\int_{0}^{t}\partial_1 g(t,t-y)\,dM_y\\
=&\mathbf{E}\partial_1 \rho_s(X) \partial_1 \rho_t(X).
\end{aligned}
\end{equation}
From the \cite{CK18} we have 
\begin{equation}
\begin{aligned}
&\mathbf{E}\partial_1 b_s(X) \partial_1 b_t(X)\\ 
=&Q_{11}(s,t) + R^{(1)}_{11}(s, t) + R^{(2)}_{11}(s, t) + R^{(3)}_{11}(s, t),    
\end{aligned}
\end{equation}
where 
\begin{equation*}
Q_{11}(s,t) := \frac{1}{2\pi} \int_{-\infty}^{\infty} \partial_1 \frac{1}{Y(-\mathrm{i}\lambda)} 
\overline{\partial_1 \frac{1}{Y(\mathrm{i}\lambda)}} \Lambda(\mathrm{i}\lambda) e^{\mathrm{i}(t - s)\lambda} \, d\lambda 
\end{equation*}
and
\begin{align*}
R^{(1)}_{11}(s, t) &:= -\frac{1}{2\pi} \int_{-\infty}^{\infty} \partial_1 \frac{1}{Y(-\mathrm{i}\lambda)} 
\overline{\partial_1 \hat{R}_s(\mathrm{i}\lambda)} \Lambda(\mathrm{i}\lambda) e^{\mathrm{i}(t - s)\lambda} \, d\lambda,  \\
R^{(2)}_{11}(s, t) &:= -\frac{1}{2\pi} \int_{-\infty}^{\infty} \partial_1 \hat{R}_t(\mathrm{i}\lambda)
 \overline{\partial_1 \frac{1}{Y(-\mathrm{i}\lambda)}} \Lambda(\mathrm{i}\lambda) e^{\mathrm{i}(t - s)\lambda} \, d\lambda,  \\
R^{(3)}_{11}(s, t) &:= \frac{1}{2\pi} \int_{-\infty}^{\infty} \partial_1 \hat{R}_t(\mathrm{i}\lambda) 
\overline{\partial_1 \hat{R}_s(\mathrm{i}\lambda)} \Lambda(\mathrm{i}\lambda) e^{\mathrm{i}(t - s)\lambda} \, d\lambda.
\end{align*}
Next we consider 
\begin{equation}
\begin{aligned}
&\mathbf{E}\partial_1 b_s(X) \partial_2 b_t(X)\\
=&\mathbf{E}\int_{0}^{s}\partial_1 g(s,s-r)\,dM_r \left[\int_{0}^{t}l(t,t-s)\,dM_s + h(t,r)dM_r\right] \\
=&\int_{0}^{s}\int_{0}^{t}\partial_1 g(s,s-r)\left[l(t,t-u) + h(t,u)\right]K_H(r-u)\,dudr\\
&+\int_{0}^{s}\partial_1 g(s,s-r)\left[l(t,t-r) + h(t,r)\right]\,dr\\
=&\int_{0}^{s}\int_{0}^{t}\partial_1 g(s,r)\left[l(t,u) + h(t,t-u)\right]K_H(s-t-r+u)\,dudr\\
&+\int_{0}^{s}\partial_1 g(s,r)\left[l(t, t-s+r) + h(t,s-r)\right]\,dr\\
\end{aligned}
\end{equation}
By the classical Plancherel's theorem, we have 
\begin{equation}
\begin{aligned}
&\mathbf{E}\partial_1 b_s(X) \partial_2 b_t(X)\\
=&\frac{1}{2\pi}\int_{-\infty}^{\infty}\partial_1 \hat{g}_s(\mathrm{i}\lambda)
\overline{\hat{l}_t(\mathrm{i}\lambda)+\hat{h}_t(\mathrm{i}\lambda)}\Lambda(\mathrm{i}\lambda) e^{\mathrm{i}(t - s)\lambda}\,d\lambda, 
\end{aligned}
\end{equation}
and in view of the Laplace Transform of $l(t,u)$ and $h(t,u)$, we have 
\begin{equation}
\label{pf10}
\begin{aligned}
&\mathbb{E}\partial_1 b_s(X) \partial_2 b_t(X)\\
=&\frac{1}{2\pi}\int_{-\infty}^{\infty}\partial_1 \hat{g}_s(\mathrm{i}\lambda)
\left[1 - \overline{\hat{g}_t(\mathrm{i}\lambda)}\right] \frac{1}{\alpha-\mathrm{i}\lambda}
\left(e^{\left(\mathrm{i}\lambda - \alpha\right)t}- 1\right) \Lambda(\mathrm{i}\lambda) e^{\mathrm{i}(t - s)\lambda}\,d\lambda\\
=&\frac{1}{2\pi}\int_{-\infty}^{\infty}\left(-\partial_1\frac{1}{Y(-\mathrm{i}\lambda)} + \partial_1 \hat{R}_s(\mathrm{i}\lambda)\right) 
\left(\frac{1}{Y(\mathrm{i}\lambda)} - \hat{R}_t(-\mathrm{i}\lambda)\right) \frac{1}{\alpha-\mathrm{i}\lambda}
\left(e^{\left(\mathrm{i}\lambda - \alpha\right)t}- 1\right) \Lambda(\mathrm{i}\lambda) e^{\mathrm{i}(t - s)\lambda}\,d\lambda\\
=&Q_{12}(s, t) + R^{(1)}_{12}(s, t) + R^{(2)}_{12}(s, t) + R^{(3)}_{12}(s, t), 
\end{aligned} 
\end{equation}
where 
\begin{equation*}
Q_{12}(s, t) := -\frac{1}{2\pi}\int_{-\infty}^{\infty}\partial_1\frac{1}{Y(-\mathrm{i}\lambda)}  
\frac{1}{Y(\mathrm{i}\lambda)} \frac{1}{\alpha-\mathrm{i}\lambda}
\left(e^{\left(\mathrm{i}\lambda - \alpha\right)t}- 1\right) \Lambda(\mathrm{i}\lambda) e^{\mathrm{i}(t - s)\lambda}\,d\lambda\\
\end{equation*}
and
\begin{align*}
R^{(1)}_{12}(s, t) &:= \frac{1}{2\pi}\int_{-\infty}^{\infty}\partial_1 \hat{R}_s(\mathrm{i}\lambda)  
\frac{1}{Y(\mathrm{i}\lambda)} \frac{1}{\alpha-\mathrm{i}\lambda}
\left(e^{\left(\mathrm{i}\lambda - \alpha\right)t}- 1\right) \Lambda(\mathrm{i}\lambda) e^{\mathrm{i}(t - s)\lambda}\,d\lambda,\\
R^{(2)}_{12}(s, t) &:= \frac{1}{2\pi}\int_{-\infty}^{\infty} \hat{R}_t(-\mathrm{i}\lambda)  
\partial_1\frac{1}{Y(-\mathrm{i}\lambda)} \frac{1}{\alpha-\mathrm{i}\lambda}
\left(e^{\left(\mathrm{i}\lambda - \alpha\right)t}- 1\right) \Lambda(\mathrm{i}\lambda) e^{\mathrm{i}(t - s)\lambda}\,d\lambda,\\
R^{(3)}_{12}(s, t) &:= -\frac{1}{2\pi} \int_{-\infty}^{\infty} \partial_1 \hat{R}_s(\mathrm{i}\lambda) 
\hat{R}_t(-\mathrm{i}\lambda) \Lambda(\mathrm{i}\lambda) e^{\mathrm{i}(t - s)\lambda} \, d\lambda.
\end{align*}

From \eqref{pf10}, we can instantly give
\begin{equation}
\begin{aligned}
&\mathbf{E}\partial_2 b_s(X) \partial_1 b_t(X)\\
=&Q_{21}(s, t) + R^{(1)}_{21}(s, t) + R^{(2)}_{21}(s, t) + R^{(3)}_{21}(s, t), 
\end{aligned}   
\end{equation}
where
\begin{align}
&Q_{21}(s, t) = Q_{12}(t, s),\\
&R^{(i)}_{21}(s, t) = R^{(i)}_{12}(t, s),\quad i=1,2,3.
\end{align}
From the same way, we can conclude the $(2,2)$-component:
\begin{equation*}
\begin{aligned}  
&\mathbf{E}\left[\partial_2 b_s(X) \partial_2 b_t(X)\right]\\
=&\frac{1}{2\pi}\int_{-\infty}^{\infty}\left[\hat{l}_s(\mathrm{i}\lambda)\overline{\hat{l}_t(\mathrm{i}\lambda)}+\hat{l}_s(\mathrm{i}\lambda)\overline{\hat{h}_t(\mathrm{i}\lambda)}+
\hat{h}_s(\mathrm{i}\lambda)\overline{\hat{l}_t(\mathrm{i}\lambda)}+\hat{h}_s(\mathrm{i}\lambda)\overline{\hat{h}_t(\mathrm{i}\lambda)}\right]
\Lambda(\mathrm{i}\lambda) e^{\mathrm{i}(t - s)\lambda}\,d\lambda\\ 
=&\frac{1}{2\pi}\int_{-\infty}^{\infty}\left(1+\hat{g}_s(\mathrm{i}\lambda)
\overline{\hat{g}_t(\mathrm{i}\lambda)}-\overline{\hat{g}_t(\mathrm{i}\lambda)}-\hat{g}_s(\mathrm{i}\lambda)\right)
\frac{1}{\alpha^2+\lambda^2}\left(e^{-(\alpha+\mathrm{i}\lambda)s}-1\right) \left(e^{-(\alpha-\mathrm{i}\lambda)t}-1\right) \Lambda(\mathrm{i}\lambda) e^{\mathrm{i}(t - s)\lambda}\,d\lambda\\ 
=&Q_{22}(s, t) + R^{(1)}_{22}(s, t) + R^{(2)}_{22}(s, t) + R^{(3)}_{22}(s, t),
\end{aligned}   
\end{equation*}
where
\begin{equation*}
Q_{22}(s, t)=\frac{1}{2\pi}\int_{-\infty}^{\infty} \frac{1}{Y(-\mathrm{i}\lambda)}\frac{1}{Y(\mathrm{i}\lambda)} 
\frac{1}{\alpha^2 + \lambda^2}(e^{-(\alpha + \mathrm{i}\lambda) s}-1)(e^{(\mathrm{i}\lambda -\alpha) t}-1) \Lambda(\mathrm{i}\lambda)e^{i(t-s)\lambda}\,d\lambda
\end{equation*}
and
\begin{align*}
R^{(1)}_{22}(s, t) &:= -\frac{1}{2\pi}\int_{-\infty}^{\infty} \hat{R}_s(\mathrm{i}\lambda)  
\frac{1}{Y(\mathrm{i}\lambda)}\frac{1}{\alpha^2 + \lambda^2}(e^{-(\alpha + \mathrm{i}\lambda) s}-1)(e^{(\mathrm{i}\lambda -\alpha) t}-1) \Lambda(\mathrm{i}\lambda) e^{\mathrm{i}(t - s)\lambda}\,d\lambda,\\
R^{(2)}_{22}(s, t) &:= -\frac{1}{2\pi}\int_{-\infty}^{\infty} \hat{R}_t(-\mathrm{i}\lambda)  
\frac{1}{Y(-\mathrm{i}\lambda)} \frac{1}{\alpha^2 + \lambda^2}(e^{-(\alpha + \mathrm{i}\lambda) s}-1)(e^{(\mathrm{i}\lambda -\alpha) t}-1)
 \Lambda(\mathrm{i}\lambda) e^{\mathrm{i}(t - s)\lambda}\,d\lambda,\\
R^{(3)}_{22}(s, t) &:= \frac{1}{2\pi} \int_{-\infty}^{\infty} \hat{R}_s(\mathrm{i}\lambda) 
\hat{R}_t(-\mathrm{i}\lambda) \frac{1}{\alpha^2 + \lambda^2}(e^{-(\alpha + \mathrm{i}\lambda) s}-1)(e^{(\mathrm{i}\lambda -\alpha) t}-1) \Lambda(\mathrm{i}\lambda) e^{\mathrm{i}(t - s)\lambda} \, d\lambda.
\end{align*}

For $Q_{ij}(s,t)$, it can be proved that
\begin{lem}
\label{lemQ}
There exists $C > 0$ such that
\begin{equation*}
\begin{aligned}
\left\lvert Q_{11}(s,t)\right\rvert &\le C \wedge \left\lvert t-s \right\rvert^{-1}\left\lvert \log|t-s|\right\rvert^3, \\
\left\lvert Q_{ij}(s,t)\right\rvert &\le C \wedge \left\lvert t-s \right\rvert^{-1}\left\lvert \log|t-s|\right\rvert^2, \quad for \quad i=1,\, j=2  \quad or \quad i=2,\, j=1\\
\left\lvert Q_{22}(s,t)\right\rvert &\le C e^{-\alpha |t-s|}. \\
\end{aligned}
\end{equation*}
Moreover, 
\begin{equation*}
Q_{11}(t,t) = \frac{1}{4\pi}\int_{-\infty}^{\infty}\left(\partial_1\log{\left(\hat{K}_{H}(\lambda)+1\right) }\right)^2 \,d\lambda,\\
\end{equation*}
and as $t \to \infty$,
\begin{align*}
&Q_{12}(t,t) = Q_{21}(t,t) \to \frac{1}{2\pi}\int_{-\infty}^{\infty} \frac{\alpha}{\alpha^2 + \lambda^2}\partial_1\log{\left(\hat{K}_{H}(\lambda)+1\right) }\,d\lambda,\\
&Q_{22}(t,t) \to \frac{1}{2\alpha}.
\end{align*}
\end{lem}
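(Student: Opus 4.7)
The plan is to base the whole proof on two structural ingredients: the factorization $Y(\mathrm{i}\lambda)Y(-\mathrm{i}\lambda)=\Lambda(\mathrm{i}\lambda)$ of Lemma \ref{properties of big Lambda} combined with the Hermitian symmetry $\overline{Y_{c}(\mathrm{i}\lambda)}=Y_{c}(-\mathrm{i}\lambda)$ (read off from \eqref{lt10}), which together yield the useful identity $|Y(-\mathrm{i}\lambda)|^{2}=\Lambda(\mathrm{i}\lambda)=1+\hat K_{H}(\lambda)$ for real $\lambda$; and the oscillatory factor $e^{\mathrm{i}(t-s)\lambda}$, from which all decay in $|t-s|$ will be extracted by integration by parts in $\lambda$, while the diagonal values $Q_{ij}(t,t)$ are obtained by cancelling this oscillation and then exploiting the residual analytic structure.

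I would first dispatch $Q_{22}$, which is the cleanest: since $\Lambda(\mathrm{i}\lambda)/[Y(\mathrm{i}\lambda)Y(-\mathrm{i}\lambda)]=1$, the integrand collapses to a purely rational--oscillatory expression. Expanding $(e^{-(\alpha+\mathrm{i}\lambda)s}-1)(e^{(\mathrm{i}\lambda-\alpha)t}-1)$ into four summands and applying the elementary identity $\frac{1}{2\pi}\int\frac{e^{\mathrm{i}a\lambda}}{\alpha^{2}+\lambda^{2}}d\lambda=\frac{1}{2\alpha}e^{-\alpha|a|}$ term by term produces the closed form
\[
Q_{22}(s,t)=\tfrac{1}{2\alpha}\bigl(e^{-\alpha(s+t)-2\alpha|t-s|}-e^{-\alpha s-\alpha|t-2s|}-e^{-\alpha t-\alpha|2t-s|}+e^{-\alpha|t-s|}\bigr).
\]
Each of the four summands is bounded by $\tfrac{1}{2\alpha}e^{-\alpha|t-s|}$ because $s+t\ge|t-s|$ and $\alpha s+\alpha|t-2s|\ge\alpha|t-s|$ (and analogously for the third term) whenever $s,t\ge 0$. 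Hence $|Q_{22}(s,t)|\le\tfrac{2}{\alpha}e^{-\alpha|t-s|}$; setting $s=t$ gives $Q_{22}(t,t)=\tfrac{1}{2\alpha}(1-e^{-2\alpha t})\to\tfrac{1}{2\alpha}$.

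For the diagonal limit of $Q_{11}$, setting $s=t$ and using $|Y(-\mathrm{i}\lambda)|^{2}=\Lambda(\mathrm{i}\lambda)$ reduces the integrand to $|F(-\mathrm{i}\lambda)|^{2}$, where $F(z):=\partial_{H}\log Y_{c}(z)$. Differentiating $\log Y_{c}(\mathrm{i}\lambda)+\log Y_{c}(-\mathrm{i}\lambda)=\log\Lambda(\mathrm{i}\lambda)$ in $H$ yields $\mathrm{Re}\,F(-\mathrm{i}\lambda)=\tfrac12\partial_{H}\log(1+\hat K_{H}(\lambda))$. Because $Y_{c}$ is holomorphic and non-vanishing on the open left half-plane with $Y_{c}(z)\to 1$ at infinity and only an integrable branch singularity at the origin, $F$ is holomorphic there and its boundary trace $F(-\mathrm{i}\lambda)$ lies in the Hardy class $H^{2}$ of the left half-plane; consequently $\mathrm{Im}\,F(-\mathrm{i}\cdot)$ is the Hilbert transform of $\mathrm{Re}\,F(-\mathrm{i}\cdot)$ and they share the same $L^{2}$-norm. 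This gives $\|F(-\mathrm{i}\cdot)\|_{L^{2}}^{2}=2\|\mathrm{Re}\,F(-\mathrm{i}\cdot)\|_{L^{2}}^{2}$, hence $Q_{11}(t,t)=\tfrac{1}{4\pi}\int(\partial_{H}\log(1+\hat K_{H}(\lambda)))^{2}\,d\lambda$. For $Q_{12}(t,t)$, dominated convergence sends $e^{(\mathrm{i}\lambda-\alpha)t}\to 0$ in the $L^{1}$ sense as $t\to\infty$, and after the substitution $\Lambda(\mathrm{i}\lambda)/Y(\mathrm{i}\lambda)=Y(-\mathrm{i}\lambda)$ the remaining integral simplifies to $-\tfrac{1}{2\pi}\int F(-\mathrm{i}\lambda)/(\alpha-\mathrm{i}\lambda)\,d\lambda$. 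Viewing $\lambda\mapsto F(-\mathrm{i}\lambda)$ as the boundary value of a function holomorphic in the lower half-plane and closing the contour there captures the simple pole of $1/(\alpha-\mathrm{i}\lambda)$ at $\lambda=-\mathrm{i}\alpha$, giving $F(-\alpha)=\tfrac{1}{\pi}\int_{0}^{\infty}\partial_{H}\alpha(\tau)/(\tau+\alpha)\,d\tau$. A short partial-fraction computation using $\int\frac{d\lambda}{(\alpha^{2}+\lambda^{2})(\tau^{2}+\lambda^{2})}=\frac{\pi}{\alpha\tau(\tau+\alpha)}$ together with the integral representation of $\partial_{H}\log(1+\hat K_{H})$ inherited from \eqref{lt10} matches this with the target $\tfrac{\alpha}{2\pi}\int\partial_{H}\log(1+\hat K_{H}(\lambda))/(\alpha^{2}+\lambda^{2})d\lambda$.

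Finally, the off-diagonal bounds on $Q_{11},Q_{12},Q_{21}$ are attacked by a two-scale oscillatory-integral analysis. The uniform bound $|Q_{ij}|\le C$ follows from $L^{1}(d\lambda)$-integrability of the integrand: near $\lambda=0$ the branch behaviour $\partial_{H}Y_{c}(-\mathrm{i}\lambda)^{-1}=O(|\lambda|^{H-\frac12}|\log|\lambda||)$ balances $\Lambda(\mathrm{i}\lambda)=O(|\lambda|^{1-2H})$ to give an integrable power of $|\lambda|$, while at $|\lambda|\to\infty$ each $\partial_{H}(Y_{c}^{-1})$ decays like $|\lambda|^{-1}$. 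The decay in $|t-s|$ comes from a single integration by parts in $\lambda$ against the oscillation: the boundary terms at $\pm\infty$ vanish, the factor $(\mathrm{i}(t-s))^{-1}$ emerges, and $\partial_{\lambda}$ is transferred onto the amplitude. Each $\partial_{H}$ already donates one factor of $\log|\lambda|$ (by differentiating the branch exponent in $|\lambda|^{1/2-H}$), and the standard two-scale split $|\lambda|\lessgtr|t-s|^{-1}$ contributes one further log, producing cumulatively $\log^{3}$ for $Q_{11}$ (two $\partial_{H}$'s) and $\log^{2}$ for $Q_{12}$ and $Q_{21}$ (one $\partial_{H}$). The main obstacle is precisely this bookkeeping: each differentiation worsens the branch-point singularity at $\lambda=0$, so one must verify stage by stage that the transferred derivatives remain integrable against the oscillatory kernel and that any boundary contribution at $\lambda=0$ arising from the integration by parts does not spoil the estimate.
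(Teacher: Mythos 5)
Your proposal is essentially the paper's argument: the factorization $\Lambda(\mathrm{i}\lambda)=Y(\mathrm{i}\lambda)Y(-\mathrm{i}\lambda)$ to collapse the integrands, explicit evaluation of the rational--oscillatory integral for $Q_{22}$, the two-scale split at $|\lambda|=|t-s|^{-1}$ with one integration by parts for the off-diagonal decay of $Q_{11},Q_{12},Q_{21}$, and a residue computation for the diagonal limits. Two sub-steps are organized differently. For $Q_{11}(t,t)$ you supply a self-contained Hardy-class/Hilbert-transform argument ($\|F\|_{L^2}^2=2\|\mathrm{Re}\,F\|_{L^2}^2$ with $F=\partial_H\log Y_c$), where the paper simply cites \cite{CK23}; this is a genuine addition and, modulo checking that $F(-\mathrm{i}\cdot)$ really lies in $H^2$ of the relevant half-plane despite the branch point at $0$, it is the standard way to prove that identity. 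For $Q_{12}(t,t)$ you close the contour to evaluate $F$ at the pole $\lambda=-\mathrm{i}\alpha$ and then re-expand via the representation of $\alpha(\tau)$, whereas the paper symmetrizes $\int\frac{\partial_1\log\Lambda(\mathrm{i}\lambda)}{\alpha-\mathrm{i}\lambda}\,d\lambda=\int\frac{\alpha\,\partial_1\log\Lambda(\mathrm{i}\lambda)}{\alpha^2+\lambda^2}\,d\lambda$ and kills the $\partial_1\log Y(\mathrm{i}\lambda)$ contribution using $\int\frac{d\lambda}{(\alpha-\mathrm{i}\lambda)(\tau-\mathrm{i}\lambda)}=0$; the two computations are equivalent.

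One point needs correcting. You assert that $\partial_H(Y_c^{-1})$ decays like $|\lambda|^{-1}$ at infinity; the actual rate (eq.\ \eqref{ff1}) is $O(|\lambda|^{1-2H}\log|\lambda|)$ with $1-2H\in(-1,-\tfrac12)$, which is slower than $|\lambda|^{-1}$ and in particular $\partial_1\log Y(-\mathrm{i}\lambda)$ alone is \emph{not} in $L^1$ at infinity. Integrability of the $Q_{11}$ integrand is saved by the square (exponent $2-4H<-1$ for $H>3/4$), and for $Q_{12}$ and $Q_{21}$ it is saved only by retaining the extra factor $(\alpha-\mathrm{i}\lambda)^{-1}\bigl(e^{(\mathrm{i}\lambda-\alpha)t}-1\bigr)$ in the amplitude, which your sketch drops when you reduce the $Q_{12}$ analysis to ``$f\in L^1$''; the same factor must be carried through the integration by parts. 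This is a bookkeeping repair rather than a structural gap, but as written the uniform bound for $Q_{12}$ does not follow from the decay rate you quote.
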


\begin{proof}
Firstly, as shown in \cite{CK23}, instantly we have 
\begin{equation*}
|Q_{11}(s, t)| \le C \wedge \left\lvert t-s \right\rvert^{-1}\left\lvert \log|t-s|\right\rvert^3.      
\end{equation*}
Considering the equation  \eqref{eq: big Lambda equal product}, let us define
\begin{equation}
\begin{aligned}
f(\lambda) :=& \partial_1\frac{1}{Y(-\mathrm{i}\lambda)}  
\frac{1}{Y(\mathrm{i}\lambda)}\Lambda(\mathrm{i}\lambda)\\
=&\partial_1 \log Y(-\mathrm{i}\lambda),
\end{aligned}
\end{equation}
from the equation (6.36) of \cite{CK23}, we have 
\begin{equation}
\label{ff2}
f(\lambda) = 
\begin{aligned}
\begin{cases}
O(\log|\lambda|^{-1}),&\lambda \to 0,\\
O(|\lambda|^{1-2H_0}\log|\lambda|),&\lambda \to \pm \infty
\end{cases}  
\end{aligned}
\end{equation}
and it can be easily to see that $f\in L^1(\mathbb{R})$. It follows that
\begin{equation}
\left\lvert Q_{12}(s,t)\right\rvert \le \left\lVert f \right\rVert _1.     
\end{equation}
and in view of \eqref{eq: asymptotic XCZ},
\begin{equation}
\label{ff1}
\partial_1 \frac{1}{Y(\mathrm{i}\lambda)} = -\frac{\partial_1 \log Y(\mathrm{i}\lambda)}{Y(\mathrm{i}\lambda)}=
\begin{aligned}
\begin{cases}
O(|\lambda|^{H_0-1/2}\log^2|\lambda|^{-1}),&\lambda \to 0,\\
O(|\lambda|^{1-2H_0}\log|\lambda|),&\lambda \to \pm \infty,
\end{cases}  
\end{aligned}
\end{equation}
Also in view of \eqref{ff1}, \eqref{eq: asymptotic XCZ} and \eqref{eq: big Lambda}, we have  
\begin{equation}
f^{\prime}(\lambda) = 
\begin{aligned}
\begin{cases}
O(|\lambda|^{-1}\log|\lambda|^{-1}),&\lambda \to 0,\\
O(|\lambda|^{-2H_0}\log|\lambda|),&\lambda \to \pm \infty.
\end{cases}  
\end{aligned}
\end{equation}
Consider
\begin{equation}
\begin{aligned}
& \int_{-\infty}^{\infty}f(\lambda)e^{i(t-s)\lambda}\,d\lambda\\
=& \int_{|\lambda| < |t-s|^{-1}}^{}f(\lambda)e^{i(t-s)\lambda}\,d\lambda + \int_{|\lambda| \ge |t-s|^{-1}}^{}f(\lambda)e^{i(t-s)\lambda}\,d\lambda\\
=&\mathrm{I} + \mathrm{II},
\end{aligned}    
\end{equation}
\begin{equation}
\begin{aligned}
& \left\lvert \mathrm{I}\right\rvert \\
\le& \int_{|\lambda| < |t-s|^{-1}}^{}\left\lvert f(\lambda)\right\rvert \,d\lambda \\
\le& C_1 \int_{|\lambda| < |t-s|^{-1}}^{}\log|\lambda|^{-1} \,d\lambda \\
\le& C_2 \left\lvert t-s \right\rvert^{-1}\left\lvert \log|t-s|\right\rvert^2,\\
\end{aligned}    
\end{equation}
and
\begin{equation}
\begin{aligned}
& \left\lvert \mathrm{II^{+}}\right\rvert \\
\le& \left\lvert f(\lambda)\left.\frac{e^{i(t-s)\lambda}}{i(t-s)}\right|^{\infty}_{|t-s|^{-1}}-
\int_{|t-s|^{-1}}^{\infty} \frac{e^{i(t-s)\lambda}}{i(t-s)} f^\prime(\lambda) \,d\lambda \right\rvert\\
\le& \frac{f(|t-s|^{-1})}{\left\lvert t-s\right\rvert } + \left\lvert t-s\right\rvert ^{-1} 
\left(\int_{|t-s|^{-1}}^{1} \left\lvert f^\prime(\lambda)\right\rvert \,d\lambda + C\right)  \\
\le& C_3 \left(\left\lvert t-s \right\rvert^{-1}\left\lvert \log|t-s|\right\rvert + \left\lvert t-s \right\rvert^{-1}\left\lvert \log|t-s|\right\rvert^2 + \left\lvert t-s\right\rvert ^{-1}
\right) ,\\
\le& C_4 \left\lvert t-s \right\rvert^{-1}\left\lvert \log|t-s|\right\rvert^2.
\end{aligned}    
\end{equation} 
To deal with $\left\lvert \mathrm{II^{-}}\right\rvert$ is something similar, so finally we have 
\begin{equation}
|Q_{12}(s, t)| \le \left\lvert \int_{-\infty}^{\infty}f(\lambda)e^{i(t-s)\lambda}\,d\lambda\right\rvert \le C\left\lvert t-s \right\rvert^{-1}\left\lvert \log|t-s|\right\rvert^2.
\end{equation}
the decay of $Q_{21}(s,t)$ will be the same of $Q_{12}(s,t)$.

Then, in view of \eqref{eq: big Lambda equal product},
\begin{equation*}
\begin{aligned}
Q_{22}(s, t) =&\frac{1}{2\pi}\int_{-\infty}^{\infty} \frac{1}{Y(-\mathrm{i}\lambda)}\frac{1}{Y(\mathrm{i}\lambda)} 
\frac{1}{\alpha^2 + \lambda^2}(e^{-(\alpha + \mathrm{i}\lambda) s}-1)(e^{(\mathrm{i}\lambda -\alpha) t}-1) \Lambda(\mathrm{i}\lambda)e^{i(t-s)\lambda}\,d\lambda\\
=&\frac{1}{2\pi}\int_{-\infty}^{\infty} 
\frac{1}{\alpha^2 + \lambda^2}(e^{-(\alpha + \mathrm{i}\lambda) s}-1)(e^{(\mathrm{i}\lambda -\alpha) t}-1)e^{i(t-s)\lambda}\,d\lambda.
\end{aligned}
\end{equation*}
Instantly, we have 
\begin{equation*}
|Q_{22}(s, t)| \le \frac{C}{2\alpha} e^{-\alpha |t-s|}.
\end{equation*}

Now let  $s=t$, first of all 
\begin{equation}
Q_{11}(t,t) = \frac{1}{4\pi}\int_{-\infty}^{\infty} \partial_1\log\left(1+\hat{K}_H(\lambda)\right) \partial_1\log\left(1+\hat{K}_H(\lambda)\right)\,d\lambda.
\end{equation}
which has been proved in \cite{CK23}. Then 
$$
Q_{12}(t,t)
=\frac{1}{2\pi}\int_{-\infty}^{\infty} \frac{1}{\alpha - \mathrm{i}\lambda} \partial_1\log{Y(-\mathrm{i}\lambda)} \,d\lambda- \frac{e^{-\alpha t}}{2\pi}\int_{-\infty}^{\infty} \frac{1}{\alpha - \mathrm{i}\lambda} \partial_1\log{Y(-\mathrm{i}\lambda)}e^{\mathrm{i}\lambda t} \,d\lambda,
$$
denote $g(\lambda) = -\frac{1}{2\pi}\int_{-\infty}^{\infty} \frac{1}{\alpha - \mathrm{i}\lambda} \partial_1\log{Y(-\mathrm{i}\lambda)} \,d\lambda$,
Since $\Lambda(z)$ is an even function, we have 
$$
\int_{-\infty}^{\infty} \frac{1}{\alpha - \mathrm{i}\lambda} \partial_1\log{\Lambda(\mathrm{i}\lambda)} \,d\lambda
=\int_{-\infty}^{\infty} \frac{1}{\alpha + \mathrm{i}\lambda} \partial_1\log{\Lambda(\mathrm{i}\lambda)} \,d\lambda
$$
and moreover, 
$$
\begin{aligned}
&\int_{-\infty}^{\infty} \frac{1}{\alpha - \mathrm{i}\lambda} \partial_1\log{\Lambda(\mathrm{i}\lambda)} \,d\lambda\\
=&\frac{1}{2}\left(\int_{-\infty}^{\infty} \frac{1}{\alpha - \mathrm{i}\lambda} \partial_1\log{\Lambda(\mathrm{i}\lambda)} \,d\lambda 
+ \int_{-\infty}^{\infty} \frac{1}{\alpha + \mathrm{i}\lambda} \partial_1\log{\Lambda(\mathrm{i}\lambda)} \,d\lambda\right)\\
=&\int_{-\infty}^{\infty} \frac{\alpha}{\alpha^2 + \lambda^2} \partial_1\log{\Lambda(\mathrm{i}\lambda)} \,d\lambda.
\end{aligned}
$$
From the previous conclude $\Lambda(-i\lambda)=Y(i\lambda)Y(i\lambda)$ we have  
$$
\begin{aligned}
&\int_{-\infty}^{\infty} \frac{1}{\alpha - \mathrm{i}\lambda} \partial_1\log{\Lambda(\mathrm{i}\lambda)} \,d\lambda \\
=&\int_{-\infty}^{\infty} \frac{1}{\alpha - \mathrm{i}\lambda} \partial_1\log{Y(-\mathrm{i}\lambda)} \,d\lambda + 
\int_{-\infty}^{\infty} \frac{1}{\alpha - \mathrm{i}\lambda} \partial_1\log{Y(\mathrm{i}\lambda)} \,d\lambda\\
=&\int_{-\infty}^{\infty} \frac{1}{\alpha - \mathrm{i}\lambda} \partial_1\log{Y(-\mathrm{i}\lambda)} \,d\lambda + 
\frac{1}{\pi}\int_{0}^{\infty}\partial_1\alpha(\tau)\left(\int_{-\infty}^{\infty} \frac{1}{\alpha - \mathrm{i}\lambda} 
\frac{1}{\tau-\mathrm{i}\lambda} \,d\lambda \right) d\tau,\\
\end{aligned}
$$
where $\alpha(\tau)=\arg\{\Lambda^+{\tau}\}$ which has been presented before and the last equality is because of \eqref{lt10}. Notice that 
$$
\int_{-\infty}^{\infty} \frac{1}{\alpha - \mathrm{i}\lambda} 
\frac{1}{\tau-\mathrm{i}\lambda} \,d\lambda = 0    
$$
the function $g(\lambda)$ will be 
$$
g(\lambda) = \int_{-\infty}^{\infty} \frac{\alpha}{\alpha^2 + \lambda^2} \partial_1\log{\Lambda(\mathrm{i}\lambda)} \,d\lambda.
$$
Now we return to the function $Q_{12}(t,t)$
$$
Q_{12}(t,t)
=\frac{1}{2\pi}g(\lambda)- \frac{e^{-\alpha t}}{2\pi}\int_{-\infty}^{\infty} \frac{1}{\alpha - \mathrm{i}\lambda} \partial_1\log{X(-\mathrm{i}\lambda)}e^{\mathrm{i}\lambda t} \,d\lambda,
$$
when we have the control
$$
\left\lvert \int_{-\infty}^{\infty} \frac{1}{\alpha - \mathrm{i}\lambda} \partial_1\log{X(-\mathrm{i}\lambda)}e^{\mathrm{i}\lambda t} \,d\lambda\right\rvert \le \int_{-\infty}^{\infty} \left\lvert \frac{1}{\alpha - \mathrm{i}\lambda} \partial_1\log{X(-\mathrm{i}\lambda)}e^{\mathrm{i}\lambda t} \right\rvert\,d\lambda =  \int_{-\infty}^{\infty}\left\lvert g(\lambda)\right\rvert \,d\lambda,
$$
and $g(\lambda)\in L^1(\mathbb{R})$, we can easily obtain that for $\alpha>0,\, t\rightarrow \infty$
$$
Q_{12}(t,t) \xrightarrow[\text{$t \to \infty$}]{} \frac{1}{2\pi}
\int_{-\infty}^{\infty} \frac{\alpha}{\alpha^2 + \lambda^2} \partial_1\log\left(1+\hat{K}_H(\lambda)\right)\,d\lambda.
$$
In fact $Q_{21}(t,t)$ holds the same limit.

At last for $Q_{22}$ we have the  direct calculation
$$
Q_{22}(t,t) \xrightarrow[\text{$t \to \infty$}]{}\frac{1}{2\pi}\int_{-\infty}^{\infty} 
\frac{1}{\alpha^2 + \lambda^2}\,d\lambda=\frac{1}{2\alpha}.
$$
\end{proof}

To prove the bounds of $R_{ij}^{(k)}$, the following lemma is given.
\begin{lem}
\label{lemR}
There exist constants $b \in (0,\frac{1}{2})$, $C > 0$ and $T_{min} > 0$ such that for all $s,t \ge T_{min}$,
\begin{equation}
\left\lvert R^{(k)}_{ij}(s,t) \right\rvert \le Cs^{-1/2}
\end{equation}
with $(i,j,k) \in \{(1,1,1),(1,2,1),(2,1,2),(2,2,1)\}$,
\begin{equation}
\left\lvert R^{(k)}_{ij}(s,t) \right\rvert \le Ct^{-1/2}   
\end{equation}
with $(i,j,k) \in \{(1,1,2),(1,2,2),(2,1,1),(2,2,2)\}$ and 
\begin{align}
&\left\lvert R^{(3)}_{11}(s,t) \right\rvert \le C(st)^{-b},\\
&\left\lvert R^{(3)}_{12}(s,t) \right\rvert \le Cs^{-b}t^{-1/2},\\
&\left\lvert R^{(3)}_{21}(s,t) \right\rvert \le Cs^{-1/2}t^{-b},\\
&\left\lvert R^{(3)}_{22}(t,s) \right\rvert \le C(st)^{-1/2}.
\end{align}
\end{lem}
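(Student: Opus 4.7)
The plan is to bound each residual term by applying Cauchy--Schwarz (or H\"older with a weight $|\lambda|^{-r}$) to separate the deterministic symbol involving $1/Y(\pm \mathrm{i}\lambda)$, $\partial_1\log Y$, $\Lambda(\mathrm{i}\lambda)$, $1/(\alpha\pm\mathrm{i}\lambda)$, $1/(\alpha^2+\lambda^2)$ from the ``stochastic'' factor involving one of the functions $m_t\in\{p_t+\tfrac12,\, q_t+\tfrac12,\, \partial_j p_t,\, \partial_j q_t\}$ evaluated at $\pm\mathrm{i}\lambda$. Lemma \ref{lempq} with $r=0$ gives $\|m_t(\mathrm{i}\cdot)\|_{L^2}=O(t^{-1/2})$, and with $r\in(0,r_{\max}]$ it gives $\|m_t(\mathrm{i}\cdot)\|_{L^2(|\lambda|^{-r}d\lambda)}=O(t^{(r-1)/2})$. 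These two estimates supply the $t^{-1/2}$ (resp.\ $s^{-1/2}$) and the $(st)^{-b}$ rates in the lemma.

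The first step is to write $\partial_j\hat R_t(\mathrm{i}\lambda)$, by differentiating \eqref{ff3}, as a finite sum of terms of the form $A_j(\mathrm{i}\lambda)\,m_t(\pm\mathrm{i}\lambda)$ and $e^{-\mathrm{i}t\lambda}\,B_j(\mathrm{i}\lambda)\,m_t(\pm\mathrm{i}\lambda)$, where each coefficient $A_j,B_j$ is built from $1/Y(\pm\mathrm{i}\lambda)$ and $\partial_1\log Y(\pm\mathrm{i}\lambda)$. Using the identity $\Lambda(\mathrm{i}\lambda)=Y(\mathrm{i}\lambda)Y(-\mathrm{i}\lambda)$, the combinations $\partial_1\tfrac{1}{Y(-\mathrm{i}\lambda)}\Lambda(\mathrm{i}\lambda)=-\partial_1\log Y(-\mathrm{i}\lambda)\,Y(\mathrm{i}\lambda)$ and $\tfrac{1}{Y(-\mathrm{i}\lambda)}\Lambda(\mathrm{i}\lambda)=Y(\mathrm{i}\lambda)$ simplify every deterministic symbol appearing in $R^{(1)}_{ij},R^{(2)}_{ij},R^{(3)}_{ij}$. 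By the asymptotics \eqref{eq: asymptotic XCZ}, \eqref{ff1}, \eqref{ff2} and the pointwise bound on $f$, each such simplified symbol is bounded on $\{|\lambda|\ge1\}$ and has only a logarithmic or polynomial $|\lambda|^{1/2-H}$ singularity at $\lambda=0$, hence lies in $L^\infty\cap L^2$ after being multiplied by a suitable weight $|\lambda|^{r/2}$.

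For $R^{(k)}_{ij}$ with $k\in\{1,2\}$ the integrand has one stochastic factor and one deterministic factor. Cauchy--Schwarz with $r=0$, using the $L^\infty$-boundedness of the deterministic factor on $\{|\lambda|\ge1\}$ and the integrability at zero provided by the factor $\tfrac{1}{\alpha\pm\mathrm{i}\lambda}$ or by $Y(\mathrm{i}\lambda)\sim|\lambda|^{1/2-H}$, immediately yields $|R^{(k)}_{ij}(s,t)|\le Cs^{-1/2}$ when the residual factor is $\partial_j\hat R_s$ and $Ct^{-1/2}$ when it is $\partial_j\hat R_t$; the oscillatory phases $e^{\mathrm{i}(t-s)\lambda}$ and $e^{-\mathrm{i}t\lambda}$ are harmless for $L^2$ estimates. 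For $R^{(3)}_{22}$, the factor $\tfrac{1}{\alpha^2+\lambda^2}\Lambda(\mathrm{i}\lambda)$ is globally bounded in $\lambda$ and $(s,t)$, so two applications of Cauchy--Schwarz with $r=0$ give $|R^{(3)}_{22}(s,t)|\le C(st)^{-1/2}$.

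The main obstacle is $R^{(3)}_{11}$ (and the mixed $R^{(3)}_{12},R^{(3)}_{21}$), where both factors are stochastic and the deterministic remainder $\partial_1\log Y(-\mathrm{i}\lambda)\,\partial_1\log Y(\mathrm{i}\lambda)\,\Lambda(\mathrm{i}\lambda)$ behaves like $|\lambda|^{1-2H}(\log|\lambda|)^2$ at zero, which prevents a direct $r=0$ estimate. To overcome this, split $\int_{\mathbb R}=\int_{|\lambda|\ge1}+\int_{|\lambda|<1}$; on the outer region the deterministic factor is bounded and Cauchy--Schwarz with $r=0$ gives $(st)^{-1/2}$, while on the inner region apply H\"older with weights $(|\lambda|^{-r},|\lambda|^{-r},|\lambda|^{2r})$ and Lemma \ref{lempq} to obtain $t^{(r-1)/2}s^{(r-1)/2}$ times a bounded integral, provided $r\in(0,r_{\max}]$ is chosen small enough that $|\lambda|^{2r}\Lambda(\mathrm{i}\lambda)\in L^\infty_{\mathrm{loc}}(\{|\lambda|<1\})$, i.e.\ $r\le H-\tfrac12$. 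Setting $b=(1-r)/2\in(0,\tfrac12)$ yields the claimed $(st)^{-b}$, and the mixed bounds $s^{-b}t^{-1/2}$, $s^{-1/2}t^{-b}$ for $R^{(3)}_{12},R^{(3)}_{21}$ follow by the same splitting, since on one of the two factors the extra $\tfrac{1}{\alpha\pm\mathrm{i}\lambda}$ already supplies integrability at zero.
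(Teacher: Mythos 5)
Your strategy is the same as the paper's: decompose $\partial_j\hat R_t$ via \eqref{ff3} into deterministic symbols times the functions $m_t$ of \eqref{eq:6.32}, use $\Lambda(\mathrm{i}\lambda)=Y(\mathrm{i}\lambda)Y(-\mathrm{i}\lambda)$ to cancel the $1/Y$ factors, control the resulting symbols by \eqref{eq: asymptotic XCZ}, \eqref{ff1}, \eqref{ff2}, and then apply Cauchy--Schwarz together with Lemma \ref{lempq} ($r=0$ for one stochastic factor, $r>0$ for two) to get the rates $s^{-1/2}$, $t^{-1/2}$ and $(st)^{-b}$ with $b=(1-r)/2$; the paper does exactly this, absorbing the weight through the analogue of Lemma 6.8 of \cite{CK23} where you instead split the domain at $|\lambda|=1$. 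One localized slip: in the $R^{(3)}_{11}$ step you identify the deterministic remainder as $\partial_1\log Y(-\mathrm{i}\lambda)\,\partial_1\log Y(\mathrm{i}\lambda)\,\Lambda(\mathrm{i}\lambda)\sim|\lambda|^{1-2H}(\log|\lambda|)^2$ near zero and then ask for $r$ \emph{small} with $r\le H-\tfrac12$ so that $|\lambda|^{2r}\Lambda(\mathrm{i}\lambda)$ stays bounded; that inequality points the wrong way (boundedness of $|\lambda|^{2r+1-2H}$ near $0$ needs $r\ge H-\tfrac12>\tfrac14$, which conflicts with ``$r$ small'' and possibly with $r_{\max}$ in Lemma \ref{lempq}), so the step as written would fail. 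It is rescued by the cancellation you yourself set up: since each $\partial_1\hat R$ carries a factor $1/Y(\mp\mathrm{i}\lambda)$, the product against $\Lambda(\mathrm{i}\lambda)$ leaves only $|\partial_1\log Y(\mathrm{i}\lambda)|^2$ (or a unimodular ratio $Y(\mathrm{i}\lambda)/Y(-\mathrm{i}\lambda)$ times it), which is merely $O(\log^2|\lambda|^{-1})$ at the origin, so any small $r>0$ does the job and your conclusion $b=(1-r)/2$ stands. The same remark applies to your claim that $\tfrac{1}{\alpha^2+\lambda^2}\Lambda(\mathrm{i}\lambda)$ is globally bounded in the $R^{(3)}_{22}$ case: it is not, but the combination actually appearing after the $1/Y$ cancellation is. With the symbols computed correctly, your proof matches the paper's.
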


\begin{proof}
Still by \cite{CK23}, the bound that $R^{(i)}_{11}(s,t)$ has already been provided.  Consider the case of $R^{(i)}_{12}(s,t)$, first of all we have 
\begin{equation}
\label{pf24}
\begin{aligned}
&\left\lvert Y(-\mathrm{i}\lambda)\partial_1\hat{R}_t(\mathrm{i}\lambda)\right\rvert 
\le \left\lvert (p_t(-\mathrm{i}\lambda) + q_t(-\mathrm{i}\lambda) + 1)\partial_1 \log Y(\mathrm{i}\lambda)\right\rvert 
+\\ &\left\lvert (p_t(\mathrm{i}\lambda)-q_t(\mathrm{i}\lambda))\partial_1 \log Y(\mathrm{i}\lambda)\right\rvert + 2\left\lvert \partial_1 p_t(\mathrm{i}\lambda)\right\rvert 
+2\left\lvert \partial_1 q_t(\mathrm{i}\lambda)\right\rvert ,
\end{aligned}
\end{equation}
where we use $\overline{Y(\mathrm{i}\lambda)} = Y(-\mathrm{i}\lambda)$. Then 
$ R^{(1)}_{12}(s,t) $ satisfies 
\begin{equation}
\label{new1}
\begin{aligned}
\left\lvert R^{(1)}_{12}(s,t)\right\rvert \le& \frac{1}{2\pi \alpha}
\int_{-\infty}^{\infty}\left\lvert 
\partial_1\hat{R}_s(\mathrm{i}\lambda)\log Y(-\mathrm{i}\lambda) \right\rvert\\
\le &C \int_{-\infty}^{\infty}\left(\left\lvert \partial_1 
p_s(\mathrm{i}\lambda)\right\rvert + \left\lvert \partial_1 
q_s(\mathrm{i}\lambda)\right\rvert + f_1(\lambda)
\left(\left\lvert p_s(\mathrm{i}\lambda) + \frac{1}{2}\right\rvert + \left\lvert q_s(\mathrm{i}\lambda) + \frac{1}{2} \right\rvert\right)  \right)\,d\lambda
\end{aligned}    
\end{equation}
where we define 
\begin{equation*}
f_1(\lambda) := \left\lvert \partial_1 \log X(\mathrm{i}\lambda) \right\rvert,  
\end{equation*}
then in view of (6.36) of \cite{CK23},
\begin{equation*}
f_1(\lambda) = 
\begin{aligned}
\begin{cases}
O(\log|\lambda|^{-1}),&\lambda \to 0,\\
O(|\lambda|^{1-2H_0}\log|\lambda|),&\lambda \to \pm \infty.
\end{cases}  
\end{aligned}
\end{equation*}
Thus $f_1 \in L^2(\mathbb{R})$. By estimate \eqref{eq:6.32} with $r=0$,
\begin{equation*}
\int_{-\infty}^{\infty}f(\lambda)\left\lvert p_s(\mathrm{i}\lambda) + \frac{1}{2}\right\rvert \,d\lambda \le C s^{-\frac{1}{2}}.
\end{equation*} 
The same estimate is valid for the rest of the integrals in eq.\eqref{new1}.  The bound of $R^{(2)}_{12}(s,t)$ can be given similarly, only change $f_1(\lambda)$ to the function $f_2(\lambda)$ defined by  
\begin{equation*}
f_2 := \left\lvert Y(\mathrm{i}\lambda)\partial_1 \log Y(-\mathrm{i}\lambda) \right\rvert    
\end{equation*}
and in view of \eqref{ff2} and \eqref{eq: big Lambda equal product}, we have 
\begin{equation*}
f_2(\lambda) = 
\begin{aligned}
\begin{cases}
O(|\lambda|^{\frac{1}{2}-H_0}\log|\lambda|^{-1}),&\lambda \to 0,\\
O(|\lambda|^{1-2H_0}\log|\lambda|),&\lambda \to \pm \infty,
\end{cases}  
\end{aligned}
\end{equation*}
Thus $f_2 \in L^2(\mathbb{R})$.

For $R^{(3)}_{12}(t,s)$,  consider
\begin{equation}
\label{pf26}
\begin{aligned}
|R^{(3)}_{12}(t,s)| &\le \frac{1}{\alpha} \int_{-\infty}^{\infty}|Y(-\mathrm{i}\lambda)\partial_1\hat{R}_t(\mathrm{i}\lambda)Y(\mathrm{i}\lambda)\overline{\hat{R}_s(\mathrm{i}\lambda)}|  \,d\lambda\\
&\le \frac{1}{\alpha} \left(\int_{-\infty}^{\infty}|Y(-\mathrm{i}\lambda)\partial_1\hat{R}_t(\mathrm{i}\lambda)|^2\,d\lambda\right) ^{1/2} \left(\int_{-\infty}^{\infty}|Y(\mathrm{i}\lambda)\hat{R}_s(\mathrm{i}\lambda)|^2\,d\lambda\right) ^{1/2}.
\end{aligned}
\end{equation}
Due to Lemma \ref{lempq}, for some $C > 0$,
\begin{equation}
\begin{aligned}
&\int_{-\infty}^{\infty} |Y(\mathrm{i}\lambda)\hat{R}_s(\mathrm{i}\lambda)|^2\,d\lambda \le\\
&2\int_{-\infty}^{\infty}\left\lvert (p_s(-\mathrm{i}\lambda) + q_s(-\mathrm{i}\lambda) + 1)\right\rvert^2  \,d\lambda\\
&+2\int_{-\infty}^{\infty}\left\lvert (p_s(\mathrm{i}\lambda) - q_s(\mathrm{i}\lambda) )\right\rvert^2  \,d\lambda\\
&\le C s^{-1}
\end{aligned}  
\end{equation}
by Lemma \ref{lempq} and Lemma 6.8 of \cite{CK23}, with $r \in (0,1)$ small enough,
\begin{equation}
\int_{-\infty}^{\infty}|Y(\mathrm{i}\lambda)\partial_1\hat{R}_s(\mathrm{i}\lambda)|^2\,d\lambda \le C t^{r-1}.
\end{equation}
Finally, we get
\begin{equation}
|R^{(3)}_{12}(t,s)| \le C s^{-1/2}t^{-b},
\end{equation}
where we define $b:=\frac{1}{2}-\frac{r}{2} \in (0,\frac{1}{2})$.

The proof of the growth estimates for $R^{(i)}_{21}(s,t),i=1,2,3$ and $R^{(i)}_{22}(s,t),i=1,2,3$ is something similar with the ones for $R^{(i)}_{12}(s,t)$.
\end{proof}

\subsection{Proof of Lemma \ref{key2}}
First we will calculate the second partial derivative with respect $b_t(X,\theta)$ they will be 
\begin{equation}
\begin{aligned}
\partial^2_1 b_t(X, \theta) &= \int_{0}^{t}\partial^2_1 g(t,t-s;H)\,dX_s + \alpha \int_{0}^{t}X_s \partial^2_1 g(t,t-s;H)\,ds,\\
&= \int_{0}^{t}\partial^2_1 g(t,t-s;H)\,dM_s,\\
\partial_1\partial_2 b_t(X,\theta) &= \partial_2\partial_1 b_t(X,\theta) = \int_{0}^{t} X_s \partial_1g(t,t-s;H)\,ds,\\
\partial^2_2 b_t(X, \theta) &= 0.
\end{aligned}    
\end{equation}
To prove for all sufficiently small $\delta > 0$ there exist constants $C > 0$ and $T_{min} > 0$ such that
\begin{equation}
\label{pf30}
\sup_{\left\lVert \theta - \theta_0\right\rVert \le \delta }  \mathbb{E} \left(\partial_i \partial_j b_t(X, \theta) \right) ^2 \le C, \quad \forall t \ge T_{min},
\end{equation}
Now we know 
\begin{equation}
\begin{aligned}
&\nabla^2 b_t(X, \theta)\\
&=\begin{pmatrix}
\partial_1 \partial_1 b_t(X, \theta) &  \partial_1 \partial_2 b_t(X, \theta)\,ds\\
\partial_2 \partial_1 b_t(X, \theta) & \partial_2 \partial_2 b_t(X, \theta), 
\end{pmatrix}\\
&=\begin{pmatrix}
\int_{0}^{t}\partial^2_1 g(t,t-s;H)\,dM_s  & \int_{0}^{t} X_s \partial_1 g(t,t-s;H)\,ds)\\
\int_{0}^{t} X_s \partial_1 g(t,t-s;H)\,ds & 0. 
\end{pmatrix},
\end{aligned}
\end{equation}
By proof of Lemma 5.2. of \cite{CK23}, we have
\begin{equation}
\label{pf33new}
\begin{aligned}
&\sup_{\left\lVert \theta-\theta_0\right\rVert\le \delta  }\mathbf{E}\left(\partial_1 \partial_1 b_t(X, \theta)\right)^2\\
=&\sup_{\left\lVert  H-H_0\right\rVert \le \delta  }\frac{1}{2\pi}\int_{-\infty}^{\infty}
\left\lvert  \partial_1^2 \hat{g}_t(\mathrm{i}\lambda;H)  \right\rvert^2\Lambda
(\mathrm{i}\lambda;H_0)\,d\lambda \le C,
\end{aligned}
\end{equation}
notice the true value of the parameter $\theta_0$, which determines the distribution of the sample $X^T$, then consider
\begin{equation}
\label{pf35new}
\begin{aligned}
&\mathbf{E}\left(\partial_1 \partial_2 b_t(X, \theta)\right)^2\\  
=&\mathbf{E}\left(\int_{0}^{t} \left(\partial_1 g(t,\cdot)* (-l(t,\cdot))\right)(t-s)\,dM_s 
\int_{0}^{t} \left(\partial_1 g(t,\cdot)* (-l(t,\cdot))\right)(t-s)\,dM_s \right)\\
=&\int_{0}^{t}\int_{0}^{t} \left(\partial_1 g(t,\cdot)* (-l(t,\cdot))\right)(t-s)
\left(\partial_1 g(t,\cdot)* (-l(t,\cdot))\right)(t-r)K_H(s-r)\,dsdr + \\
& \int_{0}^{t}\left(\partial_1 g(t,\cdot)* (-l(t,\cdot))\right)(t-s)
\left(\partial_1 g(t,\cdot)* (-l(t,\cdot))\right)(t-s)\,ds
\\
=& \frac{1}{2\pi}\int_{-\infty}^{\infty}\frac{1}{\alpha_0-\mathrm{i}\lambda}
\left(e^{\left(\mathrm{i}\lambda - \alpha_0\right)t}- 1\right)\frac{1}{\alpha_0+\mathrm{i}\lambda}
\left(e^{-\left(\mathrm{i}\lambda + \alpha_0\right)t}- 1\right)  \partial_1\hat{g}_t(\mathrm{i}\lambda;H)\partial_1 \overline{\hat{g}_t(\mathrm{i}\lambda;H)}
\Lambda(\mathrm{i}\lambda;H_0)\,d\lambda,\\
\end{aligned}
\end{equation}
it follows that
\begin{equation}
\label{pf34new}
\begin{aligned}
&\mathbf{E}\left(\partial_1 \partial_2 b_t(X, \theta)\right)^2=\\
\le&\frac{C_1}{2\pi}\int_{-\infty}^{\infty}\frac{1}{\alpha_0^2 + \lambda^2} \left\lvert \partial_1\hat{g}_t(\mathrm{i}\lambda;H)\right\rvert ^2 \Lambda(\mathrm{i}\lambda;H_0)\,d\lambda\\
\le&C_2\int_{-\infty}^{\infty}\left\lvert \partial_1\hat{g}_t(\mathrm{i}\lambda;H)\right\rvert ^2 \Lambda(\mathrm{i}\lambda;H_0)\,d\lambda\\
\le&C_3\left(\int_{-\infty}^{\infty}\left\lvert \partial_1 \frac{1}{X(\mathrm{i}\lambda;H)}\right\rvert ^2 \Lambda(\mathrm{i}\lambda;H_0)\,d\lambda 
+\int_{-\infty}^{\infty}\left\lvert \partial_1 \hat{R}_t(\mathrm{i}\lambda;H)\right\rvert ^2 \Lambda(\mathrm{i}\lambda;H_0)\,d\lambda  \right) ,
\end{aligned}
\end{equation}
for some $C_3$.

In view of eq.\eqref{ff1}
\begin{equation}
\label{ff4}
\left\lvert \partial_1 \frac{1}{Y(\mathrm{i}\lambda)}\right\rvert ^2 \Lambda(\mathrm{i}\lambda;H_0)=
\begin{aligned}
\begin{cases}
O(|\lambda|^{-2\delta}\log^4|\lambda|^{-1}),&\lambda \to 0,\\
O(|\lambda|^{2-4H}\log^2|\lambda|),&\lambda \to \pm \infty,
\end{cases}  
\end{aligned}
\end{equation}
this function is integrable on $\mathbb{R}$ for all suﬃciently small $\delta > 0$ so
\begin{equation}
\sup_{\left\lVert H-H_0\right\rVert\le \delta} \int_{-\infty}^{\infty}\left\lvert \partial_1 \frac{1}{Y(\mathrm{i}\lambda;H)}\right\rvert ^2 \Lambda(\mathrm{i}\lambda;H_0)\,d\lambda < C.   
\end{equation}

In view of eq.\eqref{ff3}, to prove 
\begin{equation}
\sup_{\left\lVert H-H_0\right\rVert\le \delta} \int_{-\infty}^{\infty}\left\lvert \partial_1 \hat{R}_t(\mathrm{i}\lambda;H)\right\rvert ^2 \Lambda(\mathrm{i}\lambda;H_0)\,d\lambda < C,       
\end{equation}
we only need to verify for all suﬃciently small $\delta > 0$, there exists positive constants $C$,$T_{\min}$ and $c$ such that
\begin{align}
\label{final1}
&\mathrm{I} = \int_{-\infty}^{\infty}\left\lvert \partial_1 \frac{1}{Y(\mathrm{i}\lambda;H)}\left(p_t(\mathrm{i}\lambda;H) + \frac{1}{2}\right) \right\rvert ^2\Lambda(\mathrm{i}\lambda;H_0)
\,d\lambda \le Ct^{-c},\\
\label{final2}
&\mathrm{II} = \int_{-\infty}^{\infty}\left\lvert  \frac{1}{Y(\mathrm{i}\lambda;H)}\partial_1 p_t(\mathrm{i}\lambda;H)  \right\rvert ^2\Lambda(\mathrm{i}\lambda;H_0)
\,d\lambda \le Ct^{-c},
\end{align}
for all $H$ such that $\left\lVert H-H_0\right\rVert\le \delta$ and all $t > T_{\min}$. 
Take any $r > 0$ small enough so that the assertion of Lemma \ref{lempq} holds.
Since we already have eq.\eqref{ff4},
it is clear that for all suﬃciently small $\delta > 0$, 
\begin{equation}
\left\lvert \partial_1 \frac{1}{Y(\mathrm{i}\lambda)}\right\rvert ^2 \Lambda(\mathrm{i}\lambda;H_0) \le C_1 |\lambda|^{-r},    
\end{equation}
then from Lemma \ref{lempq} $\mathrm{I} \le Ct^{-c}$ holds by setting $c = 1-r$.

On the other hand, from the previous presentation of $\Lambda(\lambda)$ we have 
$$
\begin{aligned}
\left\lvert  \frac{1}{Y(\mathrm{i}\lambda;H)} \right\rvert ^2\Lambda(\mathrm{i}\lambda;H_0) - 1
= \frac{\Lambda(\mathrm{i}\lambda;H_0)}{\Lambda(\mathrm{i}\lambda;H)}-1=
\begin{cases}
O(|\lambda|^{-2\delta}),&\lambda \to 0,\\
O(|\lambda|^{1-2H_0 + \delta}),&\lambda \to \pm \infty,
\end{cases}    
\end{aligned}
$$
by Lemma \ref{lempq}, we have 
\begin{equation}
\begin{aligned}
\mathrm{II} \le& \int_{-\infty}^{\infty}\left\lvert  \partial_1 p_t(\mathrm{i}\lambda;H)  \right\rvert ^2 
\left(\left\lvert \frac{1}{Y(\mathrm{i}\lambda;H)}\right\rvert ^2\Lambda(\mathrm{i}\lambda;H_0) -1 +1\right) 
\,d\lambda\\ 
\le& \int_{-\infty}^{\infty}\left\lvert  \partial_1 p_t(\mathrm{i}\lambda;H)  \right\rvert ^2 
\,d\lambda +\int_{-\infty}^{\infty}\left\lvert  \partial_1 p_t(\mathrm{i}\lambda;H)  \right\rvert ^2 
\left(\left\lvert \frac{1}{Y(\mathrm{i}\lambda;H)}\right\rvert ^2\Lambda(\mathrm{i}\lambda;H_0) -1 \right)\,d\lambda\\   
\le& Ct^{-1} + Ct^{-(1-r)}
\end{aligned}
\end{equation}

So by \eqref{final1} and \eqref{final2},
\begin{equation}
\sup_{\left\lVert \theta-\theta_0\right\rVert\le \delta}\mathbb{E}\left(\partial_1 \partial_2 b_t(X, \theta)\right)^2 \le C    
\end{equation}
for $\delta$ sufficiently small and some $t > T_{\min} $.

\end{document}